\documentclass{article}

\usepackage[utf8]{inputenc} 
\usepackage[T1]{fontenc} 
\usepackage[margin=1in]{geometry}
\usepackage[colorlinks=true,linkcolor=black,citecolor=black]{hyperref}       
\usepackage{url}            
\usepackage{booktabs}       
\usepackage{amsfonts}       
\usepackage{nicefrac}       
\usepackage{microtype}      
\usepackage{lipsum}
\usepackage{graphicx}
\graphicspath{ {./images/} }

\usepackage{natbib}
\usepackage{amsmath,amssymb,amsthm}

\usepackage{xcolor}
\usepackage{enumitem}
\usepackage{setspace}

\allowdisplaybreaks

\usepackage{xspace}

\newcommand{\Nb}{\mathbb{N}}
\newcommand{\R}{\mathbb{R}}

\newcommand{\E}{\mathbb{E}}
\newcommand{\V}{\mathbb{V}}

\newcommand{\F}{\mathcal{F}}

\newcommand{\lp}{\left(}
\newcommand{\rp}{\right)}
\newcommand{\lc}{\left\{}
\newcommand{\rc}{\right\}}

\newtheorem{theorem}{Theorem}[section]

\newtheorem{lemma}[theorem]{Lemma}
\newtheorem{proposition}[theorem]{Proposition}

\newtheorem{remark}[theorem]{Remark}

\newtheorem{fact}[theorem]{Fact}

\numberwithin{equation}{section}

\title{Bentkus-type asymptotic e-values}

\author{
 Diego Martinez-Taboada$^{1}$, Ben Chugg$^{12}$, and Aaditya Ramdas$^{12}$ \\ \\ 
  $^1$Department of Statistics \& Data Science\\
$^2$Machine Learning Department \\
  Carnegie Mellon University\\
  \texttt{\{diegomar,bchugg,aramdas\}@andrew.cmu.edu} } 
  
\begin{document}
\maketitle
\begin{abstract}

Asymptotic e-values are emerging as a powerful alternative to asymptotic p-values, particularly in post-hoc inference and multiple testing, where significance levels may be data-dependent. Existing 
asymptotic e-values, however, suffer from the ``missing factor,'' a scaling inefficiency resulting in overly conservative inference. Drawing on the framework of near-optimal concentration inequalities developed by Bentkus in the 2000s, we introduce Bentkus-type asymptotic e-values and prove that they 
successfully eliminate the missing factor.  
We also demonstrate both theoretically and empirically that Bentkus-type e-values consistently deliver sharper inference than existing alternatives,
leading to tighter post-hoc confidence intervals and higher rejection rates in multiple testing procedures.

\end{abstract}

\section{Introduction} \label{section:introduction}

E-values have recently emerged as a versatile alternative to p-values for statistical inference~\citep{ramdas2025hypothesis}. 
They offer several advantages: they remain valid under optional stopping~\citep{grunwald2024safe}, combine easily under arbitrary dependence, and exist for irregular problems where no other inferential method is known~\citep{wasserman2020universal}, among others. 
Beyond being useful, they have also proven \emph{necessary} in various problems,  such as multiple testing~\citep{wang2022false,fischer2024admissible,xu2025bringing}, statistical contract theory~\citep{bates2022principal}, and post-hoc inference~\citep{grunwald2024beyond}. 

Formally, an e-value is a nonnegative test statistic whose expected value is at most one under the null hypothesis. Ideally, analysts want e-values that are large under the alternative---that is, e-values with high power. Such  e-values have been constructed under a variety of distributional assumptions, including for bounded random variables~\citep{grunwald2024safe, waudby2024estimating, martinez2025sharp, martinez2024empirical, martinez2026intrinsic}, random variables whose variance is upper bounded by a known constant~\citep{catoni2012challenging, howard2020time, howard2021time, wang2023catoni,whitehouse2026nonasymptotic, martinez2025vector}, and parametric settings~\citep{hao2024values,grunwald2024optimal,de2025rao}, among others. 

Despite this progress, no powerful e-value is known for the prominent case of random variables with no a priori known upper bound on the variance. More precisely, if $X_1, \ldots, X_n$ are i.i.d. with mean $\E X_i = \mu$ and non-zero finite variance $\sigma^2 = \V X_1 \in (0, \infty)$, there is no known nontrivial e-value (and it is likely that none exist) for null hypothesis
\begin{align} \label{eq:main_null_hypothesis}
    H_0: \mu \leq \theta \quad \text{for some fixed } \theta \in \R.
\end{align}
For this reason, \textit{asymptotic} e-values have recently been proposed~\citep{ignatiadis2024asymptotic}. As usual in the world of asymptotics, asymptotic e-values are designed to give guarantees holding in the limit (i.e., they only hold \textit{approximately} for big $n$). Unfortunately, existing asymptotic e-values for~\eqref{eq:main_null_hypothesis} are provably suboptimal. In particular, they fall prey to an issue that has been called ``the missing factor'' in other contexts~\citep{talagrand1995missing,kuchibhotla2024missing}. This work designs asymptotic e-values which do not suffer this drawback.

\paragraph{The missing factor.}
All existing asymptotic e-values are based on the exponential function. To make the problem of the missing factor explicit, we will discuss the asymptotic e-value proposed by \citet{ignatiadis2024asymptotic} (who first also defined asymptotic e-values). Let $X_1,\dots,X_n$ be i.i.d.\ and set 
\begin{align*}
    S_n(\theta) := \sum_{i = 1}^n (X_i - \theta), \quad V_n(\theta) := \bigg(\sum_{i = 1}^n (X_i-\theta)^2\bigg)^{1/2} , \text{~ and ~} Z_n(\theta) := \frac{S_n(\theta)}{V_n(\theta)}. 
\end{align*}
Consider the random variable 
\begin{align} \label{eq:exponential_e_value}
    E_{n}^{\infty}(\theta; \lambda) := \exp \lp \lambda Z_n(\theta) - {\lambda^2}/{2}\rp. 
\end{align}
\citet{ignatiadis2024asymptotic} prove that $(E_n^\infty)$
is an asymptotic e-value for $H_0$
under the assumption of finite, nonzero variance; this has since been relaxed to $(X_n)$ lying in the domain of attraction of a normal distribution~\citep{chugg2026post}, a strictly weaker condition~\citep{gine1997student}.

Inference based on $E_n^\infty$ proceeds by thresholding: One rejects $H_0$ when $E_n^\infty \geq 1/\delta$ for some $\delta\in(0,1)$. After optimizing $\lambda$, this is equivalent to $Z_n(\theta)$ to exceeding the threshold $\sqrt{2\log(1/\delta)}$. 
By contrast, CLT-based methods---such as the Wald confidence interval---depend on the $\delta$-th quantile of the standard normal $z_\delta = G^{-1}(\delta)$,  
where $G$ is the survival function. 
The discrepancy between $z_\delta$ and $\sqrt{2\log(1/\delta)}$ is called \emph{the missing factor}---showcased in more detail in Section~\ref{sec:background}---and means that existing asymptotic e-values result in looser inference than their p-value counterparts.  


The term \emph{missing factor} comes from the \emph{nonasymptotic} concentration inequality literature~\citep{talagrand1995missing,fan2012missing,kuchibhotla2024missing}. 
In particular, Cramér-Chernoff bounds are exponential function-based inequalities, and are known to also suffer from a $\sqrt{2\log(1/\delta)}$ dependence. 
This suboptimality resulted in a flurry of activity, culminating in more sophisticated analyses which close the gap~\citep{bentkus2002remark,bentkus2004hoeffding} and led to tighter inequalities whose $\delta$-dependent factor is at most $G^{-1}( \delta / c)$, with $c$ being a small and known constant.

\paragraph{Our contributions.}
The fact that exponential-based \emph{asymptotic} e-values suffer from the missing factor suggests a question: Do there exist asymptotic e-values for the null hypothesis~\eqref{eq:main_null_hypothesis} with a smaller $\delta$-dependent factor? In particular, can we construct asymptotic e-values whose 
$\delta$-dependent term scales as $G^{-1}(\delta / c)$ and which lead to sharper inference in practice?
Our contribution is to answer these questions in the affirmative. 
We do so by establishing a novel family of asymptotic e-values that we refer to as \textit{Bentkus-type asymptotic e-values}, in honor of Vidmantus Bentkus, who pioneered near-optimal concentration inequalities~\citep{bentkus2002remark,bentkus2004hoeffding,bentkus2006domination}, and whose theoretical tools are the foundations of this work.

Like Bentkus' bounds in the nonasymptotic setting, our Bentkus-type asymptotic e-values are near-optimal in the sense that their $\delta$-dependent factor is at most $G^{-1}(\delta/c)$ for a known constant $c>0$. 
For fixed values of $\delta$ in realistic ranges ($\leq 0.1$) we prove that our Bentkus-type e-values lead to tighter inference than exponential-based asymptotic e-values. 

We then apply our results to two areas where asymptotic e-values are commonly used: post-hoc inference and multiple testing. Multiple-testing is, of course, a cornerstone of modern statistical analysis, while post-hoc inference is a new area of study which seeks to construct confidence sets that allow the significance level to be data-dependent. In both applications, we show that Bentkus-type asymptotic e-values can improve existing results, yielding sharper post-hoc confidence sets and more  rejections in multiple-testing procedures.

\paragraph{Outline.} 
Section~\ref{sec:motivation} provides background on asymptotic e-values, post-hoc inference, and multiple testing. Section~\ref{sec:background} reviews exponential-based asymptotic e-values and makes the missing factor precise, and also recalls the near-optimal concentration inequalities of Bentkus on which our construction rests. Section~\ref{sec:bentkus-type} introduces Bentkus-type asymptotic e-values, establishes their validity and near-optimality (Theorems~\ref{theorem:main_theorem}--\ref{proposition:near_optimality}), and analyzes their behavior under data-dependent choices of $\delta$. Section~\ref{section:applications} demonstrates their practical gains in post-hoc inference and multiple testing. Proofs are collected in Appendix~\ref{section:proofs}.

\section{Motivation and Related Work}
\label{sec:motivation}
 
An asymptotic e-value for a set of distributions $\mathcal{P}$ is a sequence of nonnegative random variables $(E_n)_{n \geq 1}$ which satisfies 
\begin{align}
\label{eq:asymp-evalues}
    \limsup_{n \to \infty} \E_P[E_n] \leq 1 \text{ for all } P \in \mathcal{P}.
\end{align}
We typically suppress the dependence on $\mathcal P$ when it is clear from context. Asymptotic e-values were introduced by \citet{ignatiadis2024asymptotic},\footnote{\citet{ignatiadis2024asymptotic} gave both strong and weak variants. The definition we give above is equivalent to the former.}
and should be thought of as in the same family of statistical objects as asymptotic p-values---both enable inference and can be interpreted as measures of evidence against the null hypothesis. However, as is the case for \emph{nonasymptotic} e-values and p-values, asymptotic e-values possess various properties that asymptotic p-values do not.

For instance, convex combinations of asymptotic e-values remain asymptotic e-values under arbitrary dependence, making them particularly powerful in multiple-testing procedures~\citep{xu2024post,wang2022false}.  Furthermore, asymptotic e-values have the property that they enable post-hoc analysis---inference and testing under data-dependent significance levels. 


\paragraph{Post-hoc inference.}
Post-hoc confidence intervals replace fixed-level coverage guarantees with a stronger form of risk control.  Let $\theta(P)$ denote the parameter associated
with a distribution $P\in\mathcal P$. We say that a sequence of sets $(C_{n,\delta})_{n\geq 1}$ is an
asymptotic post-hoc confidence set for $\theta(P)$ over a class $\mathcal P$ if, for all $P\in\mathcal P$,
\begin{equation}
\label{eq:risk-control}
\limsup_{n\to\infty}
\E_P\left[
    \sup_{\delta>0}
    \frac{\mathbf 1\{\theta(P)\notin C_{n,\delta}\}}{\delta}
\right]
\leq 1 .
\end{equation}
This should be contrasted with the usual fixed-level requirement
\[
    \limsup_{n\to\infty}
    \frac{P\{\theta(P)\notin C_{n,\delta}\}}{\delta}
    \leq 1
    \qquad \text{for each fixed } \delta>0 .
\]
The difference is the placement of the supremum.  Fixed-level coverage controls the error probability after $\delta$ has been chosen in advance, whereas~\eqref{eq:risk-control} controls the worst error over all levels simultaneously. Consequently, the level $\delta$ may be chosen after seeing the data.


There has been considerable recent attention on post-hoc confidence intervals and hypothesis tests~\citep{grunwald2024beyond,koning2025post,chugg2026admissibility,koobs2026equivalence,gauthier2025values}, primarily owing to the realization that e-values are both necessary and sufficient for their construction. While most of this work is on the nonasymptotic setting, \citet{chugg2026post} recently began the study of asymptotic post-hoc inference, introducing the guarantee in~\eqref{eq:risk-control}. 
Suppose that $\mathcal P = \{P_\theta:\theta\in\Theta\}$ and that we have a family of sequences $\{(E_n(\theta))_{n\geq 1}:\theta\in\Theta\}$ where $(E_n(\theta))$ is an asymptotic e-value for all distributions with parameter $\theta$. \citet{chugg2026post} show that
\begin{equation}
    C_{n,\delta} = \{ \theta: E_n(\theta) < 1/\delta\},
\end{equation}
forms an asymptotic post-hoc confidence interval for $\mathcal P$. Unlike traditional (fixed-level) inference, however, $E_n(\theta)$ cannot depend on $\delta$, which is where post-hoc inference departs from traditional inference. \citet{chugg2026post} explore two techniques for handling this: ex-ante anchoring and the method of mixtures. We discuss these further in Section~\ref{sec:background}, and Section~\ref{sec:bentkus-type}, will demonstrate how our Bentkus-type asymptotic e-values offer improvements to both alternatives.

\paragraph{Multiple-testing.} The goal of multiple testing is to simultaneously test $K$ hypotheses while controlling some global error metric, such as the false discovery rate (FDR). The Benjamini-Hochberg (BH) procedure~\citep{benjamini1995controlling}, which is based on p-values, is the gold standard for FDR control under independence or positive dependence. However, traditional p-value-based methods often struggle with arbitrary dependence structures, requiring conservative corrections  \citep{benjamini2001control} which can significantly reduce power. 

E-values have recently emerged as a powerful alternative for multiple testing, primarily through the e-BH procedure \citep{wang2022false, ramdas2025hypothesis}. Unlike p-values, e-values can be easily combined under arbitrary dependence, and the e-BH procedure provides FDR control for any dependence structure without the need for additional corrections. \citet{ignatiadis2024asymptotic} extended these concepts to asymptotic settings, where the guarantees hold in the limit (i.e., approximately). In particular, the e-BH procedure at level $1-\delta^*$ rejects hypotheses with the $k^*$ largest e-values, where
\begin{align*}
    k^* = \max \lc k \in \{ 1 , \ldots, K \}: \frac{k E_{[k]}}{K} \geq \frac{1}{\delta^*} \rc,
\end{align*}
where $E_{[k]}$ is the $k$-th order statistic of $E_1, \ldots, E_K$, from the largest to the smallest, and with the convention $\max(\emptyset) = 0$. 
Section~\ref{section:applications} will illustrate that our Bentkus-type asymptotic e-values can increase the number of rejections while maintaining FDR control.

\section{Background on Exponential E-values and Near-Optimal Inequalities}
\label{sec:background}

\textbf{Exponential-based asymptotic e-values.}
As discussed earlier, an asymptotic e-value is used for inference 
by thresholding it at level $1/\delta$. 
In the case of~\eqref{eq:exponential_e_value}, we obtain
\begin{align*}
    E_{n}^{\infty}(\theta; \lambda) \geq \frac{1}{\delta} \iff Z_n(\theta) \geq \frac{\log(1/\delta) + \lambda^2 / 2}{\lambda} =: U_{\delta, \infty}(\lambda).
\end{align*}
Note that smaller values of $U_{\delta, \infty}(\lambda)$ lead to tighter inference. 
If $\delta$ is fixed, the minimum of $U_{\delta, \infty}(\lambda)$ is 
\begin{align*}
    \inf_{\lambda \geq 0} U_{\delta, \infty}(\lambda) = \inf_{\lambda \geq 0} \frac{\log(1/\delta) + \lambda^2 / 2}{\lambda} = \sqrt{2\log(1/\delta)} =: \zeta_\delta,
\end{align*}
in which case we obtain the one-sided confidence interval $\{\theta: S_n(\theta)/V_n(\theta) \geq \zeta_\delta\}$. 
By contrast, the classical central limit theorem leads to (one-sided) confidence intervals  of the form $Z_n(\theta) \geq  z_\delta$ where $z_\delta$, as above, is the $\delta$-th quantile of a standard normal. Note that
\begin{align*}
    \exp\lp -\zeta_\delta^2/2\rp = \delta = G^{-1}(z_\delta)  \leq \frac{\exp\lp -z_\delta^2/2\rp}{\sqrt{2\pi(z_\delta^2 + 1)}},
\end{align*}
which implies $z_\delta < \zeta_\delta$, and furthermore there exists a scaling difference of $\sqrt{2\pi(u + 1)}$  (where $u$ takes the value of the corresponding quantile) as seen above. This difference is the missing factor, discussed in the introduction. Table~\ref{table:missing_factor} illustrates the missing factor for various values of $\alpha$ and $\delta$.  

\begin{table}[h!]
\centering
\caption{Illustration of the missing factor. The CLT threshold $z_\delta = G^{-1}(\delta)$ is the ideal baseline; the exponential e-value threshold $\zeta_\delta = \sqrt{2\log(1/\delta)}$ illustrates the missing factor; and $\inf_\lambda U_{\delta,\alpha}$ gives the optimal Bentkus-type threshold for $\alpha \in \{0,1,2\}$ (Theorem~\ref{proposition:near_optimality}). Note that $\inf_\lambda U_{\delta,0} = z_\delta$, i.e., $\alpha=0$ recovers the missing factor exactly. 
}
\label{table:missing_factor}
\begin{tabular}{lrrrrrr}
\toprule
$\delta$ & $z_\delta$ & $\inf_\lambda U_{\delta,0}$ & $\inf_\lambda U_{\delta,1}$ & $\inf_\lambda U_{\delta,2}$ & $\zeta_\delta$ \\
\midrule
$0.1$    & 1.282 & 1.282 & 1.755 & 1.867 & 2.146 \\
$0.05$   & 1.645 & 1.645 & 2.063 & 2.166 & 2.448 \\
$0.01$   & 2.326 & 2.326 & 2.665 & 2.754 & 3.035 \\
$0.001$  & 3.090 & 3.090 & 3.367 & 3.443 & 3.717 \\
$0.0001$ & 3.719 & 3.719 & 3.958 & 4.026 & 4.292 \\
\bottomrule
\end{tabular}

\end{table}

\textbf{Near-optimal concentration inequalities.}
Consider the family $(h_\alpha)_{\alpha \in [0, \infty]}$ of functions $h_\alpha: \R \to \R$ given by the formula
\begin{align*}
    h_\alpha(u):= 
    \begin{cases} 
        \mathbf{1} \lc u \geq 0 \rc & \text{if } \alpha = 0, \\
        (1 + u/\alpha)_+^\alpha   & \text{if } 0 < \alpha < \infty, \\
        e^{u} & \text{if } \alpha = \infty,
    \end{cases}
\end{align*}
for all $u \in \R$, 
where $\mathbf{1} \lc \cdot\rc$ denotes the indicator function, $x_+ = \max\{0,x\}$, and $x_+^\alpha = (x_+)^\alpha$. As noted in \citet{pinelis2014optimal}, the function $h_\alpha$ is nonnegative and nondecreasing for each $\alpha \in [0, \infty]$, and it is also continuous for each $\alpha \in (0, \infty]$. Moreover, for each $u \in \R$, $h_\alpha(u)$ is non decreasing and continuous in $\alpha \in [0, \infty]$. We call such functions \emph{$\alpha$-powered positive functions}. 

It is easy to see that $h_\alpha(u) \leq e^u$ for each $\alpha\in[0,\infty]$, with a strict inequality for $\alpha<\infty$. 
Thus, $h_\alpha$ provides a family of positive functions that interpolate between the indicator and the exponential, which leads to tighter inference than the Cramér-Chernoff method \citep{bentkus2002remark, bentkus2003inequality, bentkus2004hoeffding, pinelis2006binomial,pinelis2006normal, pinelis2014bennett, pinelis2014optimal}.   
To elaborate, suppose we want to bound $P(X\geq \eta)$ for some random variable $X$. For any $\alpha\in[0,\infty)$, let $\gamma = \alpha / (\eta-\lambda)$ where $\lambda <\eta$ and write $P(X\geq \eta) = \E\mathbf{1}\{\gamma(X-\eta)\geq 0\} \leq  \E h_\alpha(\gamma(X-\eta)) = \E f_{\lambda,\eta,\alpha}(X),$
where 
\begin{align} \label{eq:f_definition}
    f_{\lambda, \eta, \alpha}(r) = \frac{(r-\lambda)_+^\alpha}{(\eta-\lambda)^\alpha}, \quad r \in \R.
\end{align}
While this bound is tighter than that of the Cramér-Chernoff method (since $h_\alpha(\gamma (X-\eta))\leq \exp\{\gamma(X-\eta)\}$), controlling the expectation of $f_{\lambda,\eta,\alpha}(X)$ is more challenging. 

The method used by \citet{bentkus2004hoeffding} and \citet{pinelis2006binomial} was to show that the distributions which maximize $\E f_{\lambda,\eta,\alpha}(X)$ are binomial distributions, and then upper bound this expectation. 
Our approach differs from this strategy, as our analysis relies on Gaussian distributions instead. Importantly, an upper bound for any arbitrary pair $(\lambda, \eta)$ is not needed; it suffices to consider specific (and optimal) pairs $(\lambda, \eta_\lambda)$.  
We will thus borrow ideas from a related contribution of \citet{bentkus2006domination}, who studied the \textit{transformed survival function }
\begin{align} \label{eq:transformed_survival_function}
     G_\alpha (\eta) := \inf_{\lambda < \eta} \E f_{\lambda, \eta, \alpha} (Z) = \E f_{\lambda_\eta, \eta, \alpha} (Z)
\end{align}
for Gaussian $Z$. One of the core ideas of our contribution is to exploit Bentkus' upper bounds for $\E f_{\lambda, \eta_\lambda, \alpha}(Z)$ in order to establish the near-optimality of the proposed asymptotic e-values.


\section{Bentkus-type asymptotic e-values}
\label{sec:bentkus-type}



We are now ready to present the main results of this paper:  asymptotic e-values based on the $\alpha$-powered positive functions in Section~\ref{sec:background}. Such \textit{Bentkus-type} asymptotic e-values (Theorem~\ref{theorem:main_theorem}) retain the near-optimality guarantees of their related concentration inequalities for a fixed level (Theorem~\ref{proposition:near_optimality}). This results in a novel family of asymptotic e-values that is less conservative than exponential-based asymptotic e-values, leading to sharper inference. 
We defer all proofs to Appendix~\ref{section:proofs}, and a higher level discussion on the connection between evidence, testing, exponential functions, and $\alpha$-powered functions to Appendix~\ref{section:evidence_vs_testing}.


\subsection{Deriving Bentkus-type asymptotic e-values} \label{section:near_optimal_asymptotic_e_values}

Fix $\lambda \in \R$ and $\alpha \in [0, \infty)$, and define
\begin{align} \label{eq:alpha_powered_e_value}
    E_{n}^{\alpha}(\theta; \lambda) := \frac{\lp {Z_n(\theta)} -\lambda \rp_+^\alpha}{I_\alpha(\lambda)}, \quad I_\alpha(\lambda):= \E \lp {Z} -\lambda \rp_+^\alpha,
\end{align}
where $Z$ is a standard Gaussian. Taking $(\lambda_\eta,\eta)$ to be the optimal pair such that $\E f_{\lambda_\eta, \eta, \alpha} (Z) = \inf_{ \lambda < \eta}\E f_{\lambda, \eta, \alpha} (Z) $ (for $f$ defined in~\eqref{eq:f_definition}), we can write $E_{n}^{\alpha}(\theta; \lambda_\eta) = {f_{\lambda_\eta, \eta, \alpha}(Z_n(\theta))}/{\E f_{\lambda_\eta, \eta, \alpha}(Z)}$, elucidating the connection between~\eqref{eq:transformed_survival_function} and~\eqref{eq:alpha_powered_e_value}.
However, $E_{n}^{\alpha}$ does not depend on $\eta$, as the denominators cancel out, reducing to the truncated (and uncentered) $\alpha$-moments of a Gaussian distribution.

We start by proving in Theorem~\ref{theorem:main_theorem} that $E_n^\alpha$ is indeed an asymptotic e-value for any i.i.d.\ data with mean $\theta$ which lies in the domain of attraction of a Gaussian. 
 Recall that $(X_n)$ are said to be in the domain of attraction of a Gaussian if there exist sequences $(a_n)$ and $(b_n)$ such that
 \begin{align} \label{eq:domain_attraction_gaussian}
     \frac{\sum_{i \leq n} X_i - b_n}{a_n} \stackrel{d}{\to} N(0,1).
 \end{align}
 If the distribution has finite variance $\sigma^2$ and mean $\theta$, then~\eqref{eq:domain_attraction_gaussian} holds with $a_n = \sigma\sqrt{n}$ and $b_n = n \theta$. Thus, lying in the domain of attraction of a Gaussian is strictly more general than being i.i.d.\ with finite variance. 
 A key component of the proof of Theorem~\ref{theorem:main_theorem}, which is deferred to Appendix~\ref{proof:main_theorem}, is to establish the uniform integrability of the sequence of $\alpha$-powered positive functions.
 
\begin{theorem} \label{theorem:main_theorem}
    Let $X_1,\dots,X_n$ be i.i.d.\ with mean $\theta$ and lying in the domain of attraction of a Gaussian. For any $\lambda \in \R$ and $\alpha \in [0, \infty)$, 
    the sequence $(E_{n}^{\alpha}(\theta; \lambda))_{n\geq 1}$ is an asymptotic e-value. 
\end{theorem}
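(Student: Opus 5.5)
The plan is to show that $\E[(Z_n(\theta)-\lambda)_+^\alpha] \to I_\alpha(\lambda)$, which gives $\lim_n \E E_n^\alpha(\theta;\lambda) = 1$ and hence the required $\limsup \leq 1$. The argument has three steps: weak convergence of $Z_n(\theta)$, then uniform integrability of the transformed sequence, then convergence of expectations.

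\textbf{Step 1: weak convergence.} For i.i.d.\ data with mean $\theta$ in the domain of attraction of a Gaussian, the self-normalized sum satisfies $Z_n(\theta) = S_n(\theta)/V_n(\theta) \stackrel{d}{\to} N(0,1)$. This is the Giné--Götze--Mason theorem~\citep{gine1997student}; under finite variance it follows directly from the CLT, the LLN and Slutsky. The map $r \mapsto (r-\lambda)_+^\alpha$ is continuous for $\alpha>0$. The continuous mapping theorem therefore gives
\[
(Z_n(\theta)-\lambda)_+^\alpha \stackrel{d}{\to} (Z-\lambda)_+^\alpha .
\]

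\textbf{Step 2: the case $\alpha=0$.} Here the function is $\mathbf 1\{r>\lambda\}$ (or $\mathbf 1\{r \ge \lambda\}$). Its only discontinuity is at $\lambda$, which is a null set for the Gaussian limit. The Portmanteau theorem then gives convergence of expectations directly, since the function is bounded. The same remark covers $\alpha>0$ whenever the function is bounded on the relevant range, but in general it is not, which is why Step 3 is needed.

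\textbf{Step 3: uniform integrability for $\alpha>0$.} This is the main obstacle, because $\alpha$ is arbitrary and no moment assumptions beyond the domain of attraction are available. The natural approach is the result of \citet{gine1997student}: for $X_i$ in the domain of attraction of the normal law, the self-normalized sums are uniformly sub-Gaussian, that is,
\[
\sup_n \E \exp\lp t Z_n(\theta)^2\rp < \infty \quad \text{for some } t>0 .
\]
I would invoke this directly. This is the same ingredient used by \citet{chugg2026post} for the exponential e-value. Since
\[
(r-\lambda)_+^{\alpha p} \le C_{\alpha,p,\lambda}\, e^{t r^2} \quad \text{for any } p>1,
\]
it follows that $\sup_n \E (Z_n(\theta)-\lambda)_+^{\alpha p}<\infty$, which yields uniform integrability. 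If citing that theorem were not allowed, I would fall back on the elementary bound $|Z_n|\le \sqrt n$ together with a Hoeffding-type symmetrization inequality for self-normalized sums. However, the uniform sub-Gaussianity route is what I would commit to.

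\textbf{Conclusion.} Weak convergence combined with uniform integrability gives
\[
\E (Z_n(\theta)-\lambda)_+^\alpha \to \E (Z-\lambda)_+^\alpha = I_\alpha(\lambda) \in (0,\infty).
\]
Dividing by $I_\alpha(\lambda)$ gives $\E E_n^\alpha(\theta;\lambda)\to 1$, so the sequence is an asymptotic e-value in the sense of~\eqref{eq:asymp-evalues}.
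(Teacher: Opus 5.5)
Your proposal is correct and follows the paper's proof. Both arguments use asymptotic normality of $Z_n(\theta)$ under the Gaussian domain of attraction, then the continuous mapping theorem (for $\alpha=0$ the discontinuity at $\lambda$ is Gaussian-null), then uniform integrability, then convergence of means. The only difference is in the uniform integrability step: the paper dominates $(Z_n(\theta)-\lambda)_+^\alpha \le \alpha^\alpha e^{Z_n(\theta)-\lambda}$ and cites uniform integrability of $e^{Z_n(\theta)}$ from \citet[Lemma C.1]{chugg2026post}, whereas you bound an $L^p$ moment with $p>1$ directly through the Gin\'e--G\"otze--Mason uniform sub-Gaussianity, which is the same underlying ingredient. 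Your separate bounded-function argument for $\alpha=0$ also covers the case where the paper's $\alpha^\alpha$ domination is not literally defined.
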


In order to use asymptotic e-values $E_{n}^{\alpha}(\theta; \lambda)$, one needs to be able to evaluate $I_\alpha(\lambda) = \E \lp {Z} -\lambda \rp_+^\alpha$. In Lemma~\ref{lemma:fracional_moments}, we derive $I_\alpha$ in terms of parabolic cylinder functions, whose evaluation can be obtained via well-established numerical approximations. When $\alpha$ is an integer, the truncated moments $I_\alpha$ can also be obtained recursively as a function of $\phi$ and $G$ solely, leading to simpler analytical expressions. We formalize the result in the next lemma, whose proof is deferred to Appendix~\ref{proof:i_recursion}.

\begin{lemma} \label{lemma:i_recursion}
The base cases for the truncated moments are $I_0(\lambda) = G(\lambda)$ and $I_1(\lambda) = \phi(\lambda) - \lambda G(\lambda)$. Furthermore, for any natural $\alpha \ge 2$,
\begin{align} \label{eq:recurrence}
I_\alpha(\lambda) = (\alpha-1) I_{\alpha-2}(\lambda) - \lambda I_{\alpha-1}(\lambda).
\end{align}
\end{lemma}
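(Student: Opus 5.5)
The plan is to compute the truncated moments directly as integrals against the standard Gaussian density $\phi$, and to obtain the recursion by a single integration by parts. The one tool needed is the identity $\phi'(z) = -z\phi(z)$. For integer $\alpha$ we write
\begin{align*}
I_\alpha(\lambda) = \int_\lambda^\infty (z-\lambda)^\alpha \phi(z)\,dz .
\end{align*}
Here we use the convention $(z-\lambda)_+^0 = \mathbf{1}\{z \geq \lambda\}$. This matches $h_0$, and it gives the base case $I_0(\lambda) = P(Z \geq \lambda) = G(\lambda)$ immediately.

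For $I_1$, split $(z-\lambda) = z - \lambda$ inside the integral. The term $-\lambda \int_\lambda^\infty \phi = -\lambda G(\lambda)$ is immediate. The other term is $\int_\lambda^\infty z\phi(z)\,dz = [-\phi(z)]_\lambda^\infty = \phi(\lambda)$. Together these give $I_1(\lambda) = \phi(\lambda) - \lambda G(\lambda)$.

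For the recursion with $\alpha \geq 2$, the key trick is to write $(z-\lambda)^\alpha = (z-\lambda)^{\alpha-1}\, z - \lambda (z-\lambda)^{\alpha-1}$. The second piece integrates to $-\lambda I_{\alpha-1}(\lambda)$. In the first piece we use $z\phi(z) = -\phi'(z)$ and integrate by parts:
\begin{align*}
\int_\lambda^\infty (z-\lambda)^{\alpha-1} z\phi(z)\,dz = \Big[-(z-\lambda)^{\alpha-1}\phi(z)\Big]_\lambda^\infty + (\alpha-1)\int_\lambda^\infty (z-\lambda)^{\alpha-2}\phi(z)\,dz .
\end{align*}
The boundary term vanishes at both ends. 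At $z=\lambda$ it vanishes because $\alpha - 1 \geq 1$. At infinity it vanishes because Gaussian decay dominates any polynomial. The remaining integral is exactly $(\alpha-1) I_{\alpha-2}(\lambda)$, which yields \eqref{eq:recurrence}.

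The only point requiring care is the case $\alpha = 2$. There $I_{\alpha-2} = I_0$ must be read as the integral of the indicator, i.e.\ $(z-\lambda)^0 = 1$ on $[\lambda,\infty)$. This is consistent with the convention for $h_0$, so the same computation applies. Integrability of every term is immediate from the finiteness of all Gaussian moments, so I do not expect any genuine obstacle. The main thing to check is the vanishing of the boundary term at $z=\lambda$, which is exactly why the recursion starts at $\alpha=2$.
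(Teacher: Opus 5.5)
Your proposal is correct and follows essentially the same route as the paper: direct computation of $I_0$ and $I_1$ using $\phi'(z) = -z\phi(z)$, then the split $(z-\lambda)^\alpha = (z-\lambda)^{\alpha-1}z - \lambda(z-\lambda)^{\alpha-1}$ followed by one Gaussian integration by parts, with the boundary term vanishing because $\alpha \ge 2$. Your remark on reading $I_0$ as the integral of the indicator when $\alpha = 2$ is a small point of care that the paper leaves implicit.
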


As discussed in Section~\ref{section:introduction}, the asymptotic e-values $E_{n}^{\alpha}(\theta; \lambda)$ will be used for inference by thresholding them by $1/\delta$ (that is, $E_{n}^{\alpha}(\theta; \lambda) \geq {1}/{\delta}$). For $\alpha \in (0, \infty)$, this holds if and only if
\begin{align*}
     Z_n(\theta) \geq U_{\delta, \alpha}(\lambda) := \lambda + \lp { I_\alpha(\lambda)}/{\delta} \rp^{\frac{1}{\alpha}}, \quad \lambda \in \R.
\end{align*}
For $\alpha = 0$, we obtain
\begin{align}
    U_{\delta, 0}(\lambda) =
    \begin{cases} 
        \lambda & \text{if } G(\lambda) \le \delta,\\
        \infty   & \text{if }  G(\lambda) > \delta.
    \end{cases}
\end{align}
The optimal thresholds $\inf_\lambda U_{\delta, \alpha}(\lambda)$
inherit the near-optimality of Bentkus-type bounds for fixed $\delta$. 
The result is formalized in the following theorem, whose proof can be found in Appendix~\ref{proof:near_optimality}.

\begin{theorem} [Near-optimality of Bentkus-type asymptotic e-values for fixed $\delta$]
\label{proposition:near_optimality} 
For any $\alpha \in [0, \infty)$ and any $\delta \in (0,1)$, it holds that
\begin{align} \label{eq:u_sandwich}
    G^{-1}(\delta) \leq \inf_\lambda U_{\delta, \alpha}(\lambda) \leq G^{-1}(\delta / c_\alpha),
\end{align}
where $ c_\alpha = e^{\alpha} \alpha^{-\alpha} \Gamma(\alpha + 1)$ and $\Gamma(s) := \int_0^\infty t^{s-1}e^{-t} dt$.
\end{theorem}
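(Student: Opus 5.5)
Both inequalities in \eqref{eq:u_sandwich} reduce to statements about the Gaussian transformed survival function $G_\alpha$ of \eqref{eq:transformed_survival_function}. The case $\alpha=0$ is immediate: $U_{\delta,0}(\lambda)=\lambda$ exactly when $G(\lambda)\le\delta$, so $\inf_\lambda U_{\delta,0}(\lambda)=G^{-1}(\delta)$, and $c_0=1$ (with $0^0=1$). We therefore fix $\alpha>0$ for the rest of the argument.

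\emph{Lower bound.} Fix $\lambda$ and set $\eta=U_{\delta,\alpha}(\lambda)$. Then $\eta>\lambda$ and $(\eta-\lambda)^\alpha=I_\alpha(\lambda)/\delta$. Since $f_{\lambda,\eta,\alpha}(r)\ge \mathbf 1\{r\ge\eta\}$, we get
\[
G(\eta)\le \E f_{\lambda,\eta,\alpha}(Z)=\frac{I_\alpha(\lambda)}{(\eta-\lambda)^\alpha}=\delta .
\]
Hence $\eta\ge G^{-1}(\delta)$ for every $\lambda$, and taking the infimum over $\lambda$ gives the left inequality.

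\emph{Upper bound.} Set $\eta^*=G^{-1}(\delta/c_\alpha)$. It suffices to find some $\lambda<\eta^*$ with $\E f_{\lambda,\eta^*,\alpha}(Z)\le\delta$, because then $(\eta^*-\lambda)^\alpha\ge I_\alpha(\lambda)/\delta$, i.e.\ $U_{\delta,\alpha}(\lambda)\le\eta^*$. Equivalently, we need
\[
G_\alpha(\eta)\le c_\alpha\, G(\eta)\quad\text{for all }\eta .
\]
This is Bentkus' domination inequality \citep{bentkus2006domination}, and I would prove it directly as follows.

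The key identity is
\[
\E (Z-\lambda)_+^\alpha=\Gamma(\alpha+1)\int_\lambda^\infty \frac{(t-\lambda)^{\alpha-1}}{\Gamma(\alpha)}\,G(t)\,dt ,
\]
obtained by integrating by parts twice, or via the Riemann--Liouville form $\E(Z-\lambda)_+^\alpha=\alpha\int_\lambda^\infty (t-\lambda)^{\alpha-1}G(t)\,dt$. Next use log-concavity of $G$: the Mills-ratio bound $\phi(t)/G(t)\ge t$ gives $G(t)\le G(\eta)e^{-\eta(t-\eta)}$ for $t\ge\eta$, and a tangent-type bound with slope $\phi(\eta)/G(\eta)=:\rho\ge\eta$ gives $G(t)\le G(\eta)e^{-\rho(t-\eta)}$ for all $t$, with equality at $t=\eta$. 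Substituting the latter into the identity yields
\[
I_\alpha(\lambda)\le G(\eta)\,\alpha\, e^{\rho(\eta-\lambda)}\int_\lambda^\infty (t-\lambda)^{\alpha-1}e^{-\rho(t-\lambda)}dt=G(\eta)\,\Gamma(\alpha+1)\,\rho^{-\alpha}e^{\rho(\eta-\lambda)} .
\]
Dividing by $(\eta-\lambda)^\alpha$ and choosing $\eta-\lambda=\alpha/\rho$, which minimizes $e^{\rho s}s^{-\alpha}$, gives
\[
\E f_{\lambda,\eta,\alpha}(Z)\le G(\eta)\,\Gamma(\alpha+1)\,e^{\alpha}\alpha^{-\alpha}=c_\alpha G(\eta),
\]
exactly the constant in the statement. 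Taking $\eta=\eta^*$ and $\lambda=\eta^*-\alpha/\rho$ then completes the upper bound.

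\emph{Main obstacle.} The delicate step is the global bound $G(t)\le G(\eta)e^{-\rho(t-\eta)}$ for all $t\in\R$, including $t<\eta$. This is precisely the statement that $\log G$ lies below its tangent line at $\eta$, which holds because $\log G$ is concave, as the log-concave density $\phi$ implies a log-concave survival function. I would verify this concavity carefully via Prékopa's theorem, or via monotonicity of the hazard $\phi/G$. The constant is optimal in the one-sided exponential comparison, which is reassuring that no cruder estimate is needed.
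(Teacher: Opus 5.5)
Your proof is correct. It follows the same overall logic as the paper but proves the key step from scratch instead of citing it. For the upper bound, the paper takes Bentkus' Lemma 1.1, $G_\alpha(\eta)\le c_\alpha G^o(\eta)$, as a black box and sets $G=G^o$ by log-concavity. It then takes a near-minimizer $\lambda_\epsilon$ of $I_\alpha(\lambda)/(\eta^*-\lambda)^\alpha$ and passes to a limit using continuity in $\delta$.

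You instead prove the domination inequality directly for the Gaussian, using three ingredients:
\begin{itemize}
\item the layer-cake identity $I_\alpha(\lambda)=\alpha\int_\lambda^\infty (t-\lambda)^{\alpha-1}G(t)\,dt$;
\item the global tangent bound $G(t)\le G(\eta)e^{-\rho(t-\eta)}$, which follows from concavity of $\log G$, equivalently the monotone Gaussian hazard $\phi/G$;
\item the choice $\eta-\lambda=\alpha/\rho$, which minimizes $e^{\rho s}s^{-\alpha}$.
\end{itemize}
Each step checks out and recovers $c_\alpha$ exactly. Your route gives an explicit witness $\lambda^\star=\eta^*-\alpha G(\eta^*)/\phi(\eta^*)$ with $U_{\delta,\alpha}(\lambda^\star)\le G^{-1}(\delta/c_\alpha)$. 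So no $\epsilon$-approximation or limiting argument is needed, and $\lambda^\star$ is a concrete tuning choice, for example as an anchor or as a warm start for the convex minimization in Proposition~\ref{proposition:threshold_convexity}. The paper's route is shorter and keeps Bentkus' generality, since it covers non-log-concave tails through the log-concave hull, though that generality is not needed here.

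For the lower bound, both arguments are Markov's inequality under a Gaussian $Z$. Your version applies the pointwise bound $f_{\lambda,\eta,\alpha}(r)\ge\mathbf{1}\{r\ge\eta\}$ at $\eta=U_{\delta,\alpha}(\lambda)$, one $\lambda$ at a time. This is cleaner than the paper's contradiction argument, which is phrased through the event $\{Z\ge\inf_\lambda U_{\delta,\alpha}(\lambda)\}$.

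One small point you should state explicitly: $\eta^*=G^{-1}(\delta/c_\alpha)$ is well defined only if $c_\alpha\ge 1$. The paper gets this from Lemma~\ref{lem:c_alpha_increasing}. In your argument it comes for free, since for $\lambda=\eta-\alpha G(\eta)/\phi(\eta)$ your chain gives $G(\eta)\le \E f_{\lambda,\eta,\alpha}(Z)\le c_\alpha G(\eta)$.
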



In particular, $G^{-1}(\delta/c_\alpha) < \zeta_\delta$ for $\delta < \tau(\alpha)$, where 
\begin{align*}
\tau(\alpha) := \exp\left( -  \frac{1}{2}\left[ M^{-1}\left( {c_\alpha}/{\sqrt{2\pi}} \right) \right]^2 \right),
\end{align*}
and $M = \phi/G$ is the Mills ratio. Thus, $E_{n}^{\alpha}$  lead to strictly better inference than $E_{n}^{\infty}$ for any fixed and sufficiently small level $\delta$. Observe that $\tau(6.1) \approx 0.1$, and so Bentkus-type e-values with $\alpha \in [0, 6.1)$ deliver better inference for fixed $\delta \leq 0.1$.


\begin{remark}[The $\alpha = 0$ case]
\label{remark:alpha_zero}
Although $c_0 = 1$ means that $\alpha = 0$ achieves the exact lower bound in~\eqref{eq:u_sandwich}---the tightest possible for any $\alpha$---this pointwise optimality does not necessarily translate into practical advantages when inference must cover a range of levels $\delta$. The reason is that the $\alpha = 0$ e-value is binary in $Z_n(\theta)$: once $Z_n(\theta) \geq \lambda$, the e-value equals $1/G(\lambda)$. Furthermore, the threshold function satisfies $U_{\delta,0}(\lambda) \in \{\lambda, \infty\}$, so a mixture of $\alpha = 0$ e-values degrades sharply as $\lambda$ moves away from its optimum, unlike the smooth and convex $U_{\delta,\alpha}$ for $\alpha \geq 1$ (see Proposition~\ref{proposition:threshold_convexity}). 
We will see a practical consequence of this behavior in 
Section~\ref{section:multiple_testing}. 
\end{remark}

What if $\delta$ is not fixed? As we discussed in Section~\ref{sec:motivation}, part of the power of (asymptotic) e-values is their ability to handle data-dependent significance levels $\delta$. But doing so has tradeoffs. In particular, $\lambda$ cannot be chosen as a function of $\delta$ as was done in the analysis above~\citep{chugg2026post}. There are two strategies for handling this: ex-ante anchoring and the method of mixtures.  
The next two sections show how Bentkus-type asymptotic e-values can improve performance under both of these strategies. 

\subsection{Ex-ante anchoring} \label{section:gaussian_evalues}

When constructing post-hoc confidence intervals with asymptotic e-values, the e-value may not depend on the significance level $\delta$, which may change as a function of the data. This implies that the analysis done in Section~\ref{section:near_optimal_asymptotic_e_values} does not hold. In particular, one cannot optimize $U_{\delta,\alpha}(\lambda)$ over $\lambda$, as the optimal value will depend on $\delta$. A natural strategy is to fix some data-independent $\delta_0$ and optimize $\lambda$ over $U_{\delta_0,\alpha}$. This is called \emph{ex-ante anchoring}~\citep{chugg2026post}, and $\delta_0$ is called the \emph{anchor}.

Let $\lambda_{0,\alpha}$ be the parameter that minimizes $U_{\delta_0,\alpha}$. Thus, $U(\delta_0,\alpha)(\lambda_{0,\alpha})$ is the tightest lower boundary on our one-sided confidence interval (which we recall is $\{\theta: Z_n(\theta) \geq U_{\delta,\alpha}(\lambda)\}$) if $\delta = \delta_0$. As $\delta$ moves away from $\delta_0$, $U_{\delta,\alpha}(\lambda_0)$ grows and our confidence interval becomes looser. Figure~\ref{fig:u_vs_a} plots how fast this function grows for various values of $\alpha$ and $\delta$. 
Note that  the minima of $U_{\delta, \alpha}$ with $\alpha < \infty$ are smaller than the minima of $U_{\delta, \infty}$, and the functions $U_{\delta, \alpha}$ are above the function $U_{\delta, \infty}$ for big enough $\lambda$. Similarly, the minima of $U_{\delta, \alpha}$ are smaller for smaller values of $\alpha$, and the function $U_{\delta, \alpha}$ is above the function $U_{\delta, \alpha'}$ for $\alpha < \alpha'$ and small enough $\lambda$.

\begin{figure}[h!] 
    \center \includegraphics[width=\textwidth]{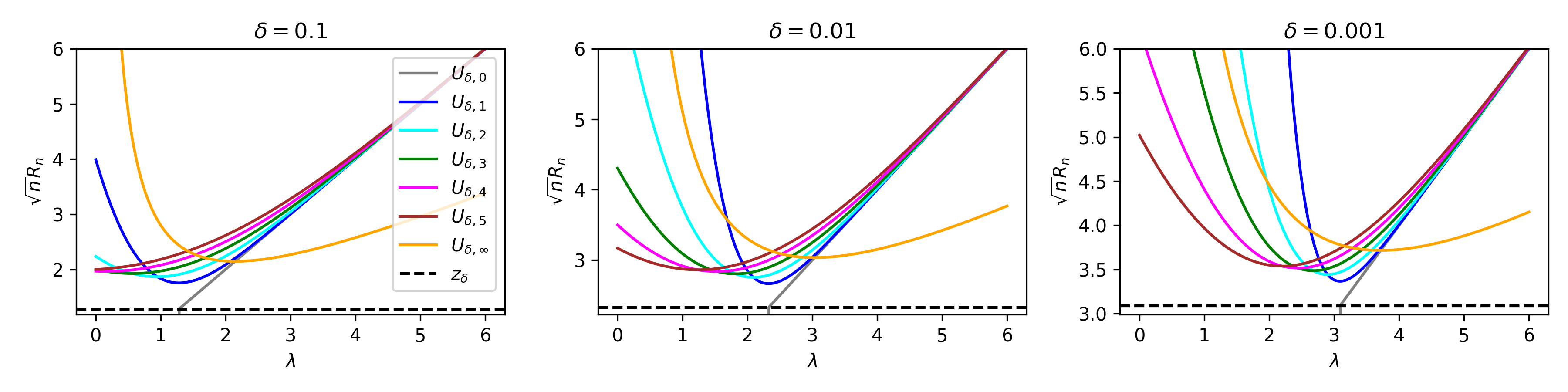}
  \caption{Evaluation of $U_{\delta, \infty}(\lambda)$  and $U_{\delta, \alpha}(\lambda)$ for $\alpha \in \{0, 1, 2, 3, 4, 5 \}$ and $\delta \in \{0.1, 0.01, 0.001\}$.} 
  \label{fig:u_vs_a}
\end{figure}

This should be interpreted as follows: Exponential e-values ($\alpha = \infty$) and large-$\alpha$ Bentkus-type e-values are more robust but also more conservative. The width of the interval is smaller for values $\lambda$ that are far from the minimizers, but larger otherwise. Consequently, the best choice of asymptotic e-value depends on the testing problem at hand. If the practitioner does not have a good sense of the (eventual) data-dependent level, more conservative e-values---higher values of $\alpha$---may be desirable.  

Figure~\ref{fig:u_vs_a} also illustrates the near-optimality of our Bentkus-type asymptotic e-values. In particular, 
the minima of $U_{\delta, \alpha}$ for $\alpha<\infty$ are smaller than the minima of $U_{\delta, \infty}$), and this difference becomes more substantial for smaller levels $\delta$, a manifestation of the missing factor. 
While explicitly establishing the minimizer of $U_{\delta,\alpha}$ is theoretically challenging, the following proposition proves that these threshold functions are convex. 
Thus, the minimizer of $U_{\delta, \alpha}$ can be obtained with arbitrary precision via convex optimization. The proof may be found in Appendix~\ref{proof:threshold_convexity}.

\begin{proposition}[Convexity of the ex-ante threshold function]
\label{proposition:threshold_convexity}
For any $\alpha \in [1, \infty)$ and target level $\delta \in (0, 1]$, the oracle threshold function $U_{\delta, \alpha}(\lambda)$ is strictly convex with respect to $\lambda$.
\end{proposition}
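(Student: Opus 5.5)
The approach is to show that $g(\lambda) := I_\alpha(\lambda)^{1/\alpha}$ is convex in $\lambda$. Since $U_{\delta,\alpha}(\lambda) = \lambda + \delta^{-1/\alpha} g(\lambda)$ differs from $\delta^{-1/\alpha} g$ only by a linear term, strict convexity of $U_{\delta,\alpha}$ is then equivalent to strict convexity of $g$.

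\textbf{Step 1: write $g$ as an $L^\alpha$-norm.} We have
\[
g(\lambda) = \big(\E (Z-\lambda)_+^\alpha\big)^{1/\alpha} = \|(Z-\lambda)_+\|_{L^\alpha},
\]
where $Z$ is standard Gaussian and $\alpha \geq 1$. The map $\lambda \mapsto (z-\lambda)_+$ is convex for each fixed $z$. Take $t\in(0,1)$ and $\lambda_0,\lambda_1$, and set $\lambda_t = t\lambda_1 + (1-t)\lambda_0$. Pointwise,
\[
0 \leq (Z-\lambda_t)_+ \leq t (Z-\lambda_1)_+ + (1-t)(Z-\lambda_0)_+ .
\]
Because $\|\cdot\|_{L^\alpha}$ is monotone on nonnegative functions and obeys Minkowski's inequality (this is where $\alpha\ge 1$ is needed), we get
\[
g(\lambda_t) \leq t g(\lambda_1) + (1-t) g(\lambda_0).
\]
So $g$ is convex, and hence $U_{\delta,\alpha}$ is convex. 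It is finite everywhere because $I_\alpha(\lambda)\in(0,\infty)$.

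\textbf{Step 2: upgrade to strict convexity.} This is the main obstacle. Equality in the chain above requires two things:
\begin{itemize}
\item the pointwise convexity inequality must be an a.s.\ equality;
\item Minkowski's inequality must be tight.
\end{itemize}
For $\lambda_0<\lambda_1$, the pointwise inequality is strict on the event $\{\lambda_0 < Z < \lambda_1\}$ whenever $t\in(0,1)$. On that event the left side equals $(Z-\lambda_t)_+$, while the right side is $(1-t)(Z-\lambda_0)$, which is strictly larger when $Z<\lambda_1$. This event has positive Gaussian probability. Since $L^\alpha$-norms are strictly monotone on nonnegative functions differing on a set of positive measure (with finite norms), this already gives
\[
g(\lambda_t) < \|t(Z-\lambda_1)_+ + (1-t)(Z-\lambda_0)_+\|_{L^\alpha} \leq t g(\lambda_1) + (1-t) g(\lambda_0).
\]
Strict convexity of $g$ follows, and therefore strict convexity of $U_{\delta,\alpha}$ for every $\delta\in(0,1]$. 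Strict monotonicity holds because $x\mapsto x^\alpha$ is strictly increasing on $[0,\infty)$ and the gap is positive on a set of positive mass.

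\textbf{Fallback.} If needed, the same conclusion could be checked via a second-derivative calculation. Using $I_\alpha'(\lambda) = -\alpha I_{\alpha-1}(\lambda)$ and $I_{\alpha-1}' = -(\alpha-1) I_{\alpha-2}$, one has $g'' > 0$ if and only if
\[
(\alpha-1)\big(I_{\alpha}I_{\alpha-2} - I_{\alpha-1}^2\big) > 0 \quad\text{or} \quad \alpha=1 .
\]
For $\alpha = 1$ the condition reduces to $I_1'' = \phi > 0$ directly. For $\alpha>1$ it follows from strict Cauchy--Schwarz/log-convexity of truncated moments in the exponent. The norm argument above avoids this computation, so I would present it as the main proof and keep the derivative route only as a check.
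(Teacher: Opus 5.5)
Your proof is correct. Its convexity half is the same as the paper's: both write $I_\alpha(\lambda)^{1/\alpha}$ as $\|(Z-\lambda)_+\|_{\alpha}$ and combine pointwise convexity of the positive part, monotonicity of the norm, and Minkowski's inequality.

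The two routes diverge at strictness.
\begin{itemize}
\item The paper puts strictness in Minkowski's inequality, arguing that $(Z-\lambda_1)_+$ and $(Z-\lambda_2)_+$ are not proportional.
\item You put it in the pointwise step, which is strict on the event $\{\lambda_0<Z<\lambda_1\}$ of positive Gaussian probability, and then use strict monotonicity of the $L^\alpha$-norm on nonnegative functions.
\end{itemize}

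Your choice is the more robust one. The equality case of Minkowski forces strictness only for $\alpha>1$. For $\alpha=1$ and nonnegative summands, $\|tf+(1-t)h\|_1 = t\|f\|_1+(1-t)\|h\|_1$ holds identically. So the paper's justification does not cover the endpoint $\alpha=1$, even though the statement includes it.

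Your argument handles every $\alpha\ge 1$ uniformly. It also agrees with the direct check $I_1''=\phi>0$ in your fallback, where you rightly treat $\alpha=1$ separately.
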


\subsection{Method of mixtures}
\label{sec:mixtures}

A second way to choose $\lambda$ is the \emph{method of mixtures}. 
Let $\pi$ be a prior distribution defined on a compact interval $\Lambda = [\lambda_{min}, \lambda_{max}] \subset \mathbb{R}$ and define the mixture asymptotic e-value as
\begin{align*}
E_n^{\alpha}(Z_n(\theta); \pi) = \int_\Lambda E_n^\alpha(Z_n(\theta); \lambda) \pi(\lambda) d\lambda = \int_\Lambda \frac{(Z_n(\theta) - \lambda)_+^\alpha}{I_\alpha(\lambda)} \pi(\lambda) d\lambda.
\end{align*}

One may wonder how the mixture e-value compares to the oracle ex-ante e-value, defined as $E_n^{\alpha}(\theta; \lambda_{\delta, \alpha})$, where $\lambda_{\delta, \alpha}$ is the optimal $\lambda$ if the level $\delta$ was known in advance. To quantify the cost of mixing, let $Z_{opt}(\delta) = \min_{\lambda \in \Lambda} U_{\delta, \alpha}(\lambda)$ denote the oracle ex-ante threshold, representing the tightest possible boundary achievable if the optimal $\lambda \in \Lambda$ were known in advance. Let $Z_{\pi}(\delta)$ be the data-dependent mixture boundary satisfying $E_n^{\alpha}(Z_{\pi}(\delta); \pi) = 1/\delta$. The following theorem establishes that, while the method of mixtures inherently incurs a hedging penalty against an optimal oracle, the threshold regret $\mathcal{R}(\delta)$ is strictly bounded by a term proportional to $1/G^{-1}(\delta)$. In particular, the theoretical cost of remaining agnostic to the data-dependent target level vanishes asymptotically as $\delta \to 0$. Its proof is deferred to Appendix~\ref{proof:mixture_regret}. 

\begin{theorem}[Threshold regret of mixture Bentkus e-values]
\label{prop:mixture_regret}
Assume $\pi(\lambda)$ is a valid probability density satisfying $\pi(\lambda) \ge \pi_{min} > 0$ for all $\lambda \in \Lambda$. For any valid target level $\delta \in (0, 1)$, the threshold $Z_{\pi}(\delta)$ satisfies the exact finite-sample upper bound
\begin{align} \label{eq:upper_bound_zpi}
     Z_{\pi}(\delta) \leq Z_{opt}(\kappa \delta),
\end{align}
where the global mixing penalty $\kappa \in (0, 1)$ is a strictly positive constant depending only on $\pi$, $\Lambda$, and $\alpha$. Furthermore, the regret $\mathcal{R}(\delta) = Z_{\pi}(\delta) - Z_{opt}(\delta)$ is upper bounded by ${\ln(c_\alpha/\kappa)}/{G^{-1}(\delta)}$.

\end{theorem}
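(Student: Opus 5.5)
The plan has three steps: prove that the mixture e-value dominates a constant multiple of the best single e-value, turn that into the threshold bound \eqref{eq:upper_bound_zpi}, and then obtain the regret bound from Theorem~\ref{proposition:near_optimality} together with a Gaussian-tail comparison.

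\textbf{Step 1: the mixture dominates a multiple of the best e-value.} For fixed $z$, the map $\lambda\mapsto (z-\lambda)_+^\alpha/I_\alpha(\lambda)$ is continuous on the compact interval $\Lambda$. Since $\pi\ge\pi_{min}$, integrating over a neighbourhood of the best $\lambda^*$ gives
\begin{align*}
E_n^\alpha(z;\pi)\;\ge\;\pi_{min}\int_\Lambda \frac{(z-\lambda)_+^\alpha}{I_\alpha(\lambda)}\,d\lambda .
\end{align*}
I then want to show that this integral is at least $\kappa\,\max_{\lambda\in\Lambda}(z-\lambda)_+^\alpha/I_\alpha(\lambda)$ for some $\kappa\in(0,1)$ that does not depend on $z$. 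A natural route is a Lipschitz-type lower bound on $\log E^\alpha(z;\lambda)$ in $\lambda$.
\begin{itemize}
\item The term $\log(1/I_\alpha(\lambda))$ has derivative $\alpha I_{\alpha-1}/I_\alpha$. This is bounded on the compact set $\Lambda$, because $I_\alpha$ is smooth and positive there.
\item The term $\alpha\log(z-\lambda)$ has derivative $-\alpha/(z-\lambda)$. This is bounded as long as $z-\lambda$ is bounded away from $0$.
\end{itemize}
The relevant $z$ satisfy $z\ge Z_{opt}(\delta)\ge \min_\Lambda U_{1,\alpha}>\lambda_{max}$, or else $z$ exceeds $\lambda^*$ by the positive margin $(I_\alpha(\lambda^*)/\delta)^{1/\alpha}$. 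Restricting to an interval of length $\min\{1,|\Lambda|\}$ around $\lambda^*$ on which the log-derivative is bounded by a constant $L$ then yields $\kappa=\pi_{min}\, e^{-L}\min\{1,|\Lambda|\}/2$.

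\textbf{Step 2: the threshold bound.} Set $z=Z_{opt}(\kappa\delta)$ and let $\lambda^*$ attain it. By definition of $U$, $E^\alpha(z;\lambda^*)=1/(\kappa\delta)$, so Step 1 gives $E_n^\alpha(z;\pi)\ge 1/\delta$. The mixture is nondecreasing in $z$, so its crossing point satisfies $Z_\pi(\delta)\le Z_{opt}(\kappa\delta)$. This is \eqref{eq:upper_bound_zpi}.

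\textbf{Step 3: the regret bound.} By Theorem~\ref{proposition:near_optimality}, $Z_{opt}(\kappa\delta)\le G^{-1}(\kappa\delta/c_\alpha)$ and $Z_{opt}(\delta)\ge G^{-1}(\delta)$. Hence
\begin{align*}
\mathcal{R}(\delta)\le G^{-1}(\kappa\delta/c_\alpha)-G^{-1}(\delta).
\end{align*}
This uses the unrestricted infimum; I would first check that the minimizer lies in $\Lambda$, or else read the sandwich as holding for the restricted infimum. To bound the difference, write $a=G^{-1}(\delta)$ and $b=G^{-1}(\delta/r)$ with $r=c_\alpha/\kappa>1$. Then $\log G$ is concave with derivative $-M$, where $M$ is the Mills ratio. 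So
\begin{align*}
\log r=\log G(a)-\log G(b)=\int_a^b M(t)\,dt\ge (b-a)M(a)\ge (b-a)\,a,
\end{align*}
using that $M$ is increasing and $M(t)>t$. This gives $b-a\le \ln(c_\alpha/\kappa)/G^{-1}(\delta)$.

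\textbf{Main obstacle.} The delicate part is Step 1: obtaining a $\kappa$ that is uniform over all relevant $z$ and $\delta\in(0,1)$. The difficulty is controlling the $\alpha\log(z-\lambda)$ term when $z$ is close to $\lambda_{max}$, and the case $\alpha\in(0,1)$. The fix I would try first is to use the monotonicity $E^\alpha(z;\lambda)\ge E^\alpha(z;\lambda')\cdot\frac{I_\alpha(\lambda')}{I_\alpha(\lambda)}$ for $\lambda\le\lambda'$. This avoids derivatives in $z$ altogether: integrate only over $\lambda\in[\lambda^*-\epsilon,\lambda^*]$ (or the right neighbourhood when $\lambda^*=\lambda_{min}$, with a separate bound there), and use the uniform bound $\sup_\Lambda I_\alpha/\inf_\Lambda I_\alpha$.
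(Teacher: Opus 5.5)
Your argument for \eqref{eq:upper_bound_zpi} is sound in outline, but it takes a different route from the paper. The paper introduces the envelope $M(z)=\max_{s\in\Lambda}E_n^\alpha(z;s)$ and first shows $Z_\pi(\delta)\ge z_0:=Z_{opt}(1)$. It then gets $\kappa$ non-constructively: $H(z)=E_n^\alpha(z;\pi)/M(z)$ is continuous and positive on $[z_0,\infty)$, with positive limit $\int_\Lambda \frac{I_\alpha(\lambda_{max})}{I_\alpha(\lambda)}\pi(\lambda)\,d\lambda$, so $\kappa=\min_{z\ge z_0}H(z)>0$. Finally it inverts the strictly increasing envelope at $Z_\pi(\delta)$. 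You instead evaluate the mixture at $z=Z_{opt}(\kappa\delta)$, bound it below by integrating near the maximizer $\lambda^*$, and use only that the mixture is nondecreasing in $z$. This skips the paper's Step~1, the invertibility of $M$, and the analysis at infinity, and it yields an explicit $\kappa$. The paper's compactness argument is shorter but gives only existence.

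The details of your Step~1 need repair, although your proposed fix has the right ingredients. The claim $\min_\Lambda U_{1,\alpha}>\lambda_{max}$ is false in general. For example, with $\alpha=1$ and $\Lambda=[1,2]$ one has $U_{1,1}(1)=1+I_1(1)\approx 1.08$. Also, an interval of fixed length $\min\{1,|\Lambda|\}$ to the right of $\lambda^*$ can reach points where $z-\lambda$ is tiny or negative, so no uniform $L$ exists there. The case split should be on whether $\lambda^*-\lambda_{min}\ge|\Lambda|/2$, not on whether $\lambda^*=\lambda_{min}$.
\begin{itemize}
\item If $\lambda^*-\lambda_{min}\ge|\Lambda|/2$, your monotonicity bound gives $E_n^\alpha(z;\lambda)\ge \rho\,E_n^\alpha(z;\lambda^*)$ on $[\lambda_{min},\lambda^*]$, with $\rho=I_\alpha(\lambda_{max})/I_\alpha(\lambda_{min})$.
\item Otherwise, use the margin $z-\lambda^*=(I_\alpha(\lambda^*)/(\kappa\delta))^{1/\alpha}\ge m:=I_\alpha(\lambda_{max})^{1/\alpha}$. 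This only needs $\kappa\delta<1$, so there is no circularity. Combined with $I_\alpha(\lambda)\le I_\alpha(\lambda^*)$ for $\lambda\ge\lambda^*$, it gives $E_n^\alpha(z;\lambda)\ge 2^{-\alpha}E_n^\alpha(z;\lambda^*)$ on $[\lambda^*,\lambda^*+\ell]$, where $\ell=\min\{m,|\Lambda|\}/2$.
\end{itemize}
Hence $\kappa=\pi_{min}\min\{\rho|\Lambda|/2,\,2^{-\alpha}\ell\}$ works for every $\alpha>0$, with no derivatives needed.

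Your Step~3 is the paper's Step~3 in different form. You use log-concavity of $G$ and the Mills-ratio bound $\phi(t)/G(t)>t$; the paper uses the fundamental theorem of calculus for $G^{-1}$ and $\phi(t)>tG(t)$. The caveat you raise is genuine, and the paper's proof shares it: it applies Theorem~\ref{proposition:near_optimality} to the minimum restricted to $\Lambda$ without comment. Your fallback of reading the sandwich as valid for the restricted infimum is not available. For fixed $\Lambda$ and small $\delta$, $Z_{opt}(\delta)=\lambda_{max}+(I_\alpha(\lambda_{max})/\delta)^{1/\alpha}$, and $\mathcal{R}(\delta)$ grows like a positive multiple of this, while $\ln(c_\alpha/\kappa)/G^{-1}(\delta)\to 0$. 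So the regret bound needs two extra hypotheses. First, the unrestricted minimizer of $U_{\kappa\delta,\alpha}$ must lie in $\Lambda$. Second, $\delta<1/2$ is required so that $G^{-1}(\delta)>0$ in your final division.
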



In many scenarios (such as the ones discussed in Section~\ref{section:applications}), one works with a range of potential data-dependent levels $\Delta \subset [\delta_{\min}, \delta_{\max}]$. Note that there exist $\lambda_{\min}$ and $\lambda_{\max}$ that are optimal for $\delta_{\min}$ and $\delta_{\max}$, respectively.  In such a case, $\lambda_\delta = \arg\min_\lambda U_{\delta, \alpha}(\lambda)\in[\lambda_{\min}, \lambda_{\max}]$. Thus, it suffices to consider a $\Lambda = [\lambda_{\min}, \lambda_{\max}]$, and $\pi$ putting mass on all $\Lambda$, in order to recover the theoretical guarantees established in Theorem~\ref{prop:mixture_regret}.

\section{Applications} \label{section:applications}

\subsection{Post-hoc inference} \label{section:posthoc_inference}

\begin{figure}[t] 
    \center \includegraphics[width=\textwidth]{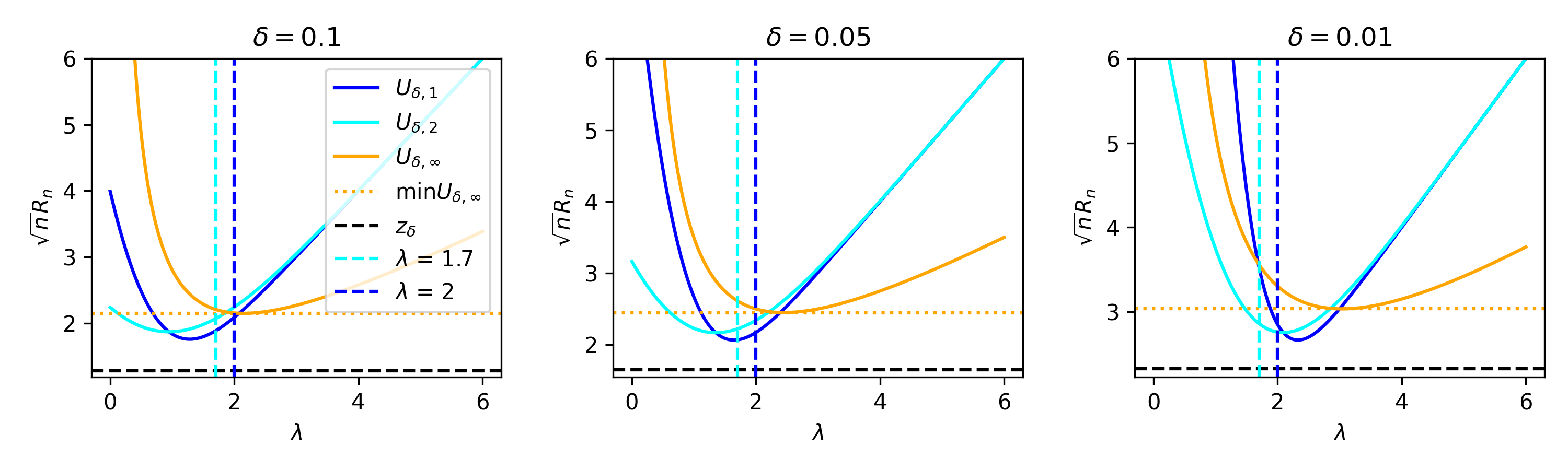}
    \caption{Threshold functions $U_{\delta,\alpha}(\lambda)$ for $\alpha \in \{1,2,\infty\}$ and $\delta \in \{0.1, 0.05, 0.01\}$.
The minima for $\alpha=1$ and $\alpha=2$ lie below the minimum for $\alpha=\infty$ at every displayed level, reflecting the near-optimality established in
Theorem~\ref{proposition:near_optimality}.}
  \label{fig:posthoc}
\end{figure}

Recall from Section~\ref{sec:motivation} that a post-hoc confidence set is obtained by inverting a family of e-values: $C_{n,\delta} = \{\theta: E_n(\theta) < 1/\delta\}$. 
As we saw in Section~\ref{section:gaussian_evalues}, exponential e-values have a better worst case performance as $\delta$ departs from the anchor $\delta_0$. Therefore, if a practitioner wants to remain completely agnostic as to the data-dependent level $\delta$, exponential-based e-values may be preferable. 

In many applications, however, complete agnosticism about $\delta$ is unnecessary. A practitioner may only care about a small collection of scientifically meaningful
levels, such as $\delta \in \{0.1,0.05,0.01\}$. Figure~\ref{fig:posthoc} shows that, over this
range, the Bentkus-type e-values $E^{1}_n(\mu;2)$ and $E^{2}_n(\mu;1.7)$ yield smaller thresholds than the best exponential e-value, even if the exponential
tuning parameter were chosen separately for each level. This illustrates that the fixed-level gains from Theorem~\ref{proposition:near_optimality} remain practically relevant at commonly used levels.


 Figure~\ref{fig:posthoc} also implies that mixtures of $E_{n}^{1}(\theta; \lambda)$, $E_{n}^{2}(\theta; \lambda)$ will outperform mixtures of $E_{n}^{\infty}(\theta; \lambda)$. More precisely, consider defining a mixture distribution $\pi_\alpha$ which is a uniform distribution over the three values of $\lambda$ that are optimal for $\delta\in\{0.1,0.05, 0.01\}$. 
 Table~\ref{table:posthoc} reports the resulting e-values for various observed values of $Z$.  
 The Bentkus-type mixtures with $\alpha=1$ and $\alpha=2$ yield larger e-values than the exponential mixture across each value of $Z$. 
 For instance, when $Z=2.1$, the exponential mixture gives
$1/E_n=0.128$, so the null cannot be excluded at level $0.1$; by contrast, the
$\alpha=1$ and $\alpha=2$ mixtures give values of $0.077$ and $0.092$, respectively. Similarly, at $Z=2.4$ and $Z=3.0$, the Bentkus-type mixtures cross
the conventional $0.05$ and $0.01$ thresholds, while the
exponential mixture does not.
 The experimental details can be found in Appendix~\ref{section:posthoc_experiment}.

\begin{table}[t]
\centering
\caption{
Post-hoc inference comparison between Bentkus-type and exponential mixture e-values. For each family, the mixture is uniform over the three values of $\lambda$ that are optimal for the target levels $\delta \in \{0.1,0.05,0.01\}$. Entries report both the resulting e-values and their reciprocals $1/E_n$. The $\alpha=1$ and $\alpha=2$ Bentkus-type mixtures produce smaller post-hoc levels than the exponential mixture $(\alpha=\infty)$ across the displayed values of $Z$.
}
\label{table:posthoc}
\begin{tabular}{rrrrrrrrr}
\toprule
$Z$ & $E_n^{0}$ & $E_n^{1}$ & $E_n^{2}$ & $E_n^\infty$ & $\delta(E_n^{0})$ & $\delta(E_n^{1})$ & $\delta(E_n^{2})$ & $\delta(E_n^\infty)$ \\
\midrule
2.1 & 10.00 & 13.02 & 10.87 & 7.81 & 0.100 & 0.077 & 0.092 & 0.128 \\
2.4 & 43.33 & 27.16 & 26.61 & 16.53 & 0.023 & 0.037 & 0.038 & 0.060 \\
2.7 & 43.33 & 63.57 & 58.51 & 35.37 & 0.023 & 0.016 & 0.017 & 0.028 \\
3.0 & 43.33 & 99.98 & 106.56 & 76.58 & 0.023 & 0.010 & 0.009 & 0.013 \\
\bottomrule
\end{tabular}

\end{table}

\subsection{Multiple testing}
\label{section:multiple_testing}

We next consider the consequences of Bentkus-type asymptotic e-values for multiple testing. Recall from Section~\ref{sec:motivation} that the e-BH procedure rejects the $k^*$ largest e-values, where $k^* = \max\{k: kE_{(k)}/K\geq 1/\delta^*\}$, and $E_{(1)}\geq E_{(2)}\geq \dots\geq E_{(K)}$ are the ordered asymptotic e-values (here we are hiding their dependence on the sample size). 

Although the target FDR level $\delta^\star$ is fixed, the threshold applied to any particular e-value is data-dependent. Indeed, if $R_i$ denotes the rank of
$E_i$ among $E_1,\dots,E_K$, with $R_i=1$ corresponding to the largest e-value, then hypothesis $k$ can be rejected only if $E_{(k)}\geq K/(R_k \delta^*)$. Equivalently, the e-value $E_k$ is being tested at the data-dependent level $\delta_k := R_k\delta^*/K$. Thus, multiple testing provides another setting in which the relevant significance level is not known in advance.


This observation suggests a natural way to tune mixtures of Bentkus-type e-values. If the e-BH procedure is expected to reject at most a $\rho$-fraction of the hypotheses, then the ranks of rejected hypotheses lie in
$\{1,\dots,\rho K\}$, and the corresponding data-dependent levels lie in the
range $\delta_k \in [\delta^*/K, \rho\delta^*]$. 
In sparse large-scale testing problems, this range can be substantially smaller
than $(0,\delta^\star]$. For example, e-BH rejection rates are often on the order
of a few percent in practice, and sparse signals are common in applications such as neuroimaging~\citep{genovese2002thresholding, chumbley2009false} (active regions tend to comprise less than 10\% of the voxels) and genome-wide association studies~\citep{storey2003statistical} (the proportion of truly associated SNPs is often estimated to be well below $1\%$).  It is therefore natural to place mixture mass on values of the tuning parameter $\theta$ that are optimized for levels in restricted ranges. 

\begin{table}[t]
\centering
\caption{
Multiple testing simulation comparing Bentkus-type and exponential mixture e-values under e-BH. Each mixture is uniform over 10 values of $\theta$, chosen on a grid
optimized for data-dependent levels in $[\delta^*/K,\rho\delta^*] = [0.001,0.02]$, with $K=100$, $\delta^\star=0.1$, and $\rho=0.2$. Entries give the average number of rejections over 1000 simulations as the
proportion of non-null hypotheses varies. For each scenario, the highest number of rejections is highlighted in {\color{blue}blue}, while the most conservative method yielding the fewest discoveries is highlighted in {\color{red}red}: the $\alpha=1$ and $\alpha=2$ mixtures uniformly improve over the exponential mixture $(\alpha=\infty)$.
}
\label{table:multiple_testing}
\begin{tabular}{lrrrrr}
\toprule
Prop. Non-Null & 0.01 & 0.025 & 0.05 & 0.075 & 0.1 \\
\midrule
$\alpha = 0$  & {\color{red}0.00} & {\color{red}0.00} & {\color{red}1.78} & 4.72 & {\color{blue}7.64} \\
$\alpha = 1$  & 0.50 & {\color{blue}1.14} & {\color{blue}3.14} & {\color{blue}5.05} & 7.48 \\
$\alpha = 2$  & {\color{blue}0.54} & 1.12 & 3.02 & 4.99 & 7.40 \\
$\alpha = \infty$ & 0.50 & 0.91 & 2.55 & {\color{red}4.30} & {\color{red}6.32} \\
\bottomrule
\end{tabular}

\end{table}

We illustrate this idea in a simulation with $K=100$ hypotheses and target FDR level $\delta^\star=0.1$. We take $\rho=0.2$, so that the mixture is tuned for
levels in the interval $[\delta^*/K,\rho\delta^*]=[0.001,0.02]$. For each e-value family, we form a uniform mixture over a grid of 10 values of
$\theta$, linearly spaced between the values that are optimal for the endpoints of this interval. We generate null statistics from $N(0,1)$ and non-null
statistics from $N(3.5,1)$, and apply e-BH to the resulting mixture e-values. Precise experimental configurations are detailed in Appendix~\ref{section:multiple_testing_experiment}.

Table~\ref{table:multiple_testing} reports the average number of rejections over 1000 simulations as the proportion of non-null hypotheses varies. The Bentkus-type mixtures with
$\alpha=1$ and $\alpha=2$ uniformly improve on the exponential mixture, with the largest gains appearing when the non-null proportion is moderate. 

The $\alpha=0$ results illustrate a more subtle tradeoff, consistent with Remark~\ref{remark:alpha_zero}. At sparse non-null proportions ($1\%$ and $2.5\%$), $\alpha=0$ achieves zero rejections, while all other families reject at least some hypotheses. This is a direct consequence of the binary structure of the $\alpha=0$ e-value: e-BH succeeds in sparse settings by accurately ranking the strongest signals above weaker ones, and a binary e-value cannot make this distinction. At denser non-null proportions ($7.5\%$ and $10\%$), however, the picture reverses: $\alpha=0$ outperforms all other families, including $\alpha=1$ and $\alpha=2$. Here, many hypotheses have large $Z_n$ and ranking is less critical; what matters instead is the tightness of the threshold. In short, $\alpha=0$ concentrates its gains in dense regimes at the cost of power in sparse ones, making the choice of $\alpha$ problem-dependent.

\section{Conclusion}

This work connects the theory of 
near-optimal finite-sample concentration inequalities with asymptotic inference based on e-values. 
We introduced Bentkus-type asymptotic e-values, a novel family of statistics built upon $\alpha$-powered positive functions rather than exponentials. 
Under Gaussian domains of attraction, these statistics inherit the near-optimal behavior of Bentkus' original nonasymptotic bounds, and bypass the  ``missing factor'' inherent in existing asymptotic e-values. More concisely:  we have  provided an alternative to existing exponential e-values that lead to strictly tighter inference in common settings.

We also showed that these gains are not merely formal, 
demonstrating how our results translate into practical advantages in two applications. In particular, they result in greater power when combined with the e-BH procedure in multiple testing and in tighter asymptotic post-hoc confidence intervals. 


\section*{Acknowledgements}
We thank Neil Xu and Arun Kuchibhotla for helpful conversations. BC and AR acknowledge support from NSF grants IIS-2229881 and DMS-2310718. 

\bibliographystyle{apalike}
\bibliography{bib}

\appendix
\section{Proofs} \label{section:proofs}

\subsection{Proof of Theorem~\ref{theorem:main_theorem}} \label{proof:main_theorem}

Under the domain of attraction of a Gaussian assumption, the statistic $Z_n(\theta)$ is asymptotically normal as a simple consequence of Raikov's theorem, as noted e.g. in  \citet{maller1981theorem} and \citet{gine1997student}. Since the function
\begin{align*}
    x\mapsto E_{\lambda, \alpha}(x) := \frac{\lp x-\lambda \rp_+^\alpha}{I_\alpha(\lambda)}, \quad \alpha \in (0, \infty)
\end{align*}
is continuous on $\R$, and
\begin{align*}
    x\mapsto E_{\lambda, 0}(x) := \frac{\lp x-\lambda \rp_+^0}{I_0(\lambda)} = \frac{\mathbf{1} \lc x \geq \lambda \rc}{G(\lambda)}
\end{align*}
is continuous on $\R \backslash \{ \lambda \}$, the continuous mapping theorem also yields $E_{n}^{\alpha}(\theta; \lambda) \to E_{\lambda, \alpha}(Z)$ in distribution, where $Z \sim N(0,1)$.

Next we claim that $E_{n}^{\alpha}(\theta; \lambda)$ is uniformly integrable. It suffices to show that $(Z_n(\theta) - \lambda)_+^\alpha$ is uniformly integrable, as $I_\alpha(\lambda)$ is constant across $n$. We observe that $(1 + u/\alpha)_+^\alpha \leq e^u$ for $\alpha \in [0, \infty)$, and so
\begin{align*}
    (Z_n(\theta) - \lambda)_+^\alpha &= \alpha^{\alpha} \lp \frac{Z_n(\theta) - \lambda}{\alpha}\rp_+^\alpha
    \leq \alpha^{\alpha} \lp 1 + \frac{Z_n(\theta) - \lambda}{\alpha}\rp_+^\alpha
    \\&\leq \alpha^{\alpha} \exp \lp Z_n(\theta) - \lambda \rp.
\end{align*}
Thus, it suffices to show that $\exp \lp Z_n(\theta) \rp$ is uniformly integrable, which follows from \citet[Lemma C.1]{chugg2026post}. Finally, we invoke \citet[Theorem 25.12]{billingsley1995proba}, which shows that convergence in distribution together with uniform integrability proves that the means convergence, i.e., 
\[\E[E_{\lambda_\eta}(T_n)] \stackrel{n\to\infty}{\to}\E[E_{\lambda_\eta}(Z)] = 1,\]
completing the proof.

\subsection{Proof of Lemma~\ref{lemma:i_recursion}} \label{proof:i_recursion}

For the base cases, we have $I_0(\lambda) = \mathbb{E}[ \mathbf{1}\{Z \ge \lambda\} ] = \mathbb{P}(Z \ge \lambda) = G(\lambda)$. For $I_1(\lambda)$, using the property $\phi'(z) = -z\phi(z)$, we obtain
\begin{align*}
I_1(\lambda) = \int_\lambda^\infty (z-\lambda) \phi(z) dz = \int_\lambda^\infty z \phi(z) dz - \lambda \int_\lambda^\infty \phi(z) dz = \phi(\lambda) - \lambda G(\lambda).
\end{align*}
For the recurrence with $\alpha \ge 2$, using Gaussian integration by parts and the identity $z\phi(z) = -\phi'(z)$, we have
\begin{align*}
I_\alpha(\lambda) &= \int_\lambda^\infty (z-\lambda)^{\alpha-1} (z - \lambda) \phi(z) dz = \int_\lambda^\infty (z-\lambda)^{\alpha-1} z \phi(z) dz - \lambda I_{\alpha-1}(\lambda) \\
&= \big[-(z-\lambda)^{\alpha-1} \phi(z) \big]_\lambda^\infty + \int_\lambda^\infty (\alpha-1) (z-\lambda)^{\alpha-2} \phi(z) dz - \lambda I_{\alpha-1}(\lambda) \\
&= (\alpha-1) I_{\alpha-2}(\lambda) - \lambda I_{\alpha-1}(\lambda),
\end{align*}
where the boundary term vanishes because $\alpha \ge 2$ and the Gaussian density $\phi(z)$ decays exponentially.

\subsection{Proof of Theorem~\ref{proposition:near_optimality}} \label{proof:near_optimality}

We will prove the result by first handling the boundary case $\alpha = 0$, and then establishing the upper and lower bounds separately for $\alpha > 0$.

\paragraph{Case $\alpha = 0$.}
The thresholding condition $E_n^{0}(\theta;\lambda) \ge 1/\delta$ is satisfied if and only if $Z_n(\theta) \ge \lambda$ and $G(\lambda) \le \delta$. This requires setting $\lambda \ge G^{-1}(\delta)$. Consequently, the minimum threshold is exactly $\inf_\lambda U_{\delta, 0}(\lambda) = G^{-1}(\delta)$. Since $c_0 = 1$, the theoretical bounds $G^{-1}(\delta) \le \inf_\lambda U_{\delta, 0}(\lambda) \le G^{-1}(\delta/1)$ collapse to a single point, satisfying the theorem with exact equality.

\paragraph{Case $\alpha \in (0, \infty)$.}
For the upper bound, we define the transformed survival function as the infimum over the $\alpha$-powered positive functions
\begin{equation*}
    G_\alpha(\eta) := \inf_{\lambda < \eta} \mathbb{E} f_{\lambda,\eta,\alpha}(Z) = \inf_{\lambda < \eta} \frac{\mathbb{E}[(Z-\lambda)_+^\alpha]}{(\eta-\lambda)^\alpha} = \inf_{\lambda < \eta} \frac{I_\alpha(\lambda)}{(\eta-\lambda)^\alpha}.
\end{equation*}
\citet[Lemma 1.1]{bentkus2006domination} establishes that $G_\alpha(\eta) \le c_\alpha G^o(\eta)$ for $\alpha \in (0, \infty)$, where $G^o$ is the log-concave hull of $G$. Because the normal distribution is log-concave, $G = G^o$, and we have $G_\alpha(\eta) \le c_\alpha G(\eta)$.

We wish to evaluate this bound at the target quantile $\eta^* = G^{-1}(\delta/c_\alpha)$. Because $c_\alpha > 1$ for all $\alpha \in (0, \infty)$ by Lemma~\ref{lem:c_alpha_increasing}, we have $\delta/c_\alpha \in (0, 1)$ for any valid significance level $\delta \in (0, 1]$, ensuring the inverse survival function is well-defined. Substituting $\eta^*$ into the bound yields
\begin{equation*}
    G_\alpha(\eta^*) \le c_\alpha G(G^{-1}(\delta/c_\alpha)) = c_\alpha \left(\frac{\delta}{c_\alpha}\right) = \delta.
\end{equation*}

By the definition of the infimum, for any $\epsilon > 0$, there exists a shift parameter $\lambda_\epsilon < \eta^*$ such that
\[
\frac{I_\alpha(\lambda_\epsilon)}{(\eta^*-\lambda_\epsilon)^\alpha} \le \delta + \epsilon \iff \lambda_\epsilon + \left(\frac{I_\alpha(\lambda_\epsilon)}{\delta+\epsilon}\right)^{\frac{1}{\alpha}} \le \eta^*
\]
Recognizing the left side of this final inequality as exactly our defined threshold function $U_{\delta+\epsilon,\alpha}(\lambda_\epsilon)$, we establish that $\inf_{\lambda\in\mathbb{R}} U_{\delta+\epsilon,\alpha}(\lambda) \le \eta^*$. Taking the limit as $\epsilon \to 0$, the continuity of the threshold function with respect to $\delta$ guarantees the upper bound
\[
\inf_{\lambda\in\mathbb{R}}U_{\delta,\alpha}(\lambda) \le \eta^* = G^{-1}(\delta/c_\alpha).
\]

Recognizing the left side of this final inequality as exactly our defined threshold function $U_{\delta,\alpha}(\lambda^*)$, we establish the upper bound
\begin{equation*}
    \inf_{\lambda \in \mathbb{R}} U_{\delta,\alpha}(\lambda) \le U_{\delta,\alpha}(\lambda^*) \le \eta^* = G^{-1}(\delta/c_\alpha).
\end{equation*}

\textbf{Lower Bound.} We recall that $E_n^\alpha(\theta;\lambda) \ge 1/\delta$ holds if, and only if,
\begin{equation*}
    Z_n(\theta) \ge U_{\delta,\alpha}(\lambda) := \lambda + \left(\frac{I_\alpha(\lambda)}{\delta}\right)^{\frac{1}{\alpha}}.
\end{equation*}
Furthermore, if $Z_n(\theta) \sim N(0,1)$ then $E_n^\alpha(\theta;\lambda)$ is an exact e-value valid in finite samples. Thus, by Markov's inequality, $\mathbb{P}(E_n^\alpha(\theta;\lambda) \ge 1/\delta) \le \delta$. If the lower bound did not hold, we would have $\inf_{\lambda} U_{\delta,\alpha}(\lambda) < G^{-1}(\delta)$, which implies
\begin{equation*}
    \mathbb{P}\left(Z_n(\theta) \ge \inf_\lambda U_{\delta,\alpha}(\lambda)\right) > \mathbb{P}(Z \ge G^{-1}(\delta)) = \delta,
\end{equation*}
leading to a contradiction with the type-I error control guaranteed by Markov's inequality.

\subsection{Proof of Proposition~\ref{proposition:threshold_convexity}} \label{proof:threshold_convexity}

Let us recall that
\begin{align*}
U_{\delta, \alpha}(\lambda) &= \lambda + \delta^{-1/\alpha} \big( I_\alpha(\lambda) \big)^{1/\alpha}.
\end{align*}
Because the first term $\lambda$ is linear, it suffices to prove that the mapping $\lambda \mapsto I_\alpha(\lambda)^{1/\alpha}$ is convex. We can recognize this term as the $L_\alpha$-norm of the positive-part random variable, which we denote by
\begin{align*}
N_\alpha(\lambda) &= \| (Z - \lambda)_+ \|_\alpha = \left( \mathbb{E} \big[ (Z - \lambda)_+^\alpha \big] \right)^{1/\alpha}.
\end{align*}

Let $\lambda_1, \lambda_2 \in \mathbb{R}$ and $t \in [0, 1]$. We define the convex combination $\lambda_t = t \lambda_1 + (1 - t) \lambda_2$. Because the function $x \mapsto \max(0, x)$ is convex, applying it to the random variable $Z - \lambda_t$ yields the pointwise inequality
\begin{align*}
(Z - \lambda_t)_+ &= \big( t(Z - \lambda_1) + (1-t)(Z - \lambda_2) \big)_+ \\
&\le t(Z - \lambda_1)_+ + (1-t)(Z - \lambda_2)_+.
\end{align*}
Because the $L_\alpha$-norm is strictly increasing with respect to pointwise positive domination, taking the norm of both sides preserves the inequality and we obtain
\begin{align*}
N_\alpha(\lambda_t) &\le \big\| t(Z - \lambda_1)_+ + (1-t)(Z - \lambda_2)_+ \big\|_\alpha.
\end{align*}
Because $\alpha \ge 1$, we can apply Minkowski's inequality to bound the norm of the sum by the sum of the norms, yielding
\begin{align*}
\big\| t(Z - \lambda_1)_+ + (1-t)(Z - \lambda_2)_+ \big\|_\alpha &\le \big\| t(Z - \lambda_1)_+ \big\|_\alpha + \big\| (1-t)(Z - \lambda_2)_+ \big\|_\alpha \\
&= t N_\alpha(\lambda_1) + (1-t) N_\alpha(\lambda_2).
\end{align*}
This sequence of inequalities establishes that $N_\alpha(t \lambda_1 + (1-t)\lambda_2) \le t N_\alpha(\lambda_1) + (1-t)N_\alpha(\lambda_2)$, confirming that $N_\alpha(\lambda)$ is a convex function. Because $U_{\delta, \alpha}(\lambda)$ is the sum of a linear function and a positively scaled convex function, it is convex. Furthermore, because the standard normal distribution has full support on $\mathbb{R}$, the random variables $(Z - \lambda_1)_+$ and $(Z - \lambda_2)_+$ are not linearly dependent for $\lambda_1 \neq \lambda_2$. Hence, Minkowski's inequality is strict, and $U_{\delta, \alpha}(\lambda)$ is strictly convex.

\subsection{Proof of Theorem~\ref{prop:mixture_regret}} \label{proof:mixture_regret}

Let us first establish the duality between the oracle threshold and the unmixed e-value. For a fixed $\lambda \in \Lambda$, $U_{\delta,\alpha}(\lambda)$ uniquely solves $E_n^{\alpha}(z; \lambda) = 1/\delta$. Let $M(z) = \max_{s \in \Lambda} E_n^\alpha(z; s)$. Because $z \mapsto E_n^\alpha(z;s)$ is strictly increasing for $\alpha > 0$ and $z \geq s$, its upper envelope $M(z)$ is also strictly increasing for $z \geq \lambda_{min}$. For the boundary case $\alpha = 0$, the upper envelope exactly evaluates to $M(z) = 1/G(z)$ for $z \in \Lambda$, which is also strictly increasing because the survival function $G$ is strictly decreasing. Evaluating at the oracle minimum $Z_{opt}(\delta) = \min_{\lambda\in\Lambda} U_{\delta,\alpha}(\lambda)$ establishes the duality identity
\begin{equation}
    M(Z_{opt}(\delta)) = \frac{1}{\delta}.
    \label{eq:duality_identity}
\end{equation}
Because $M(z)$ is strictly increasing, it is invertible, mapping a target e-value level $1/u$ uniquely to the oracle threshold $Z_{opt}(u)$.

We prove the remainder of the result in three steps:
\begin{enumerate}
    \item We bound the data-dependent threshold to the active rejection domain.
    \item We establish a strictly positive mixing penalty $\kappa$ to bound the mixture threshold.
    \item We bound the threshold regret $\mathcal{R}(\delta)$ to conclude the proof.
\end{enumerate}

\textbf{Details of step 1.} We evaluate the mixture ratio on the active rejection domain. For any valid target level $\delta \in (0, 1]$, the data-dependent threshold $Z_\pi(\delta)$ is defined by the condition:
\begin{equation*}
    E_n^\alpha(Z_\pi(\delta); \pi) = \frac{1}{\delta}.
\end{equation*}
Because $\delta \le 1$, it immediately follows that $1/\delta \ge 1$, which implies:
\begin{equation}
    E_n^\alpha(Z_\pi(\delta); \pi) \ge 1.
    \label{eq:mixture_ge_1}
\end{equation}
Furthermore, the mixture e-value is a weighted average of individual e-values over a valid probability density $\pi(\lambda)$ (which integrates to $1$). It is thus strictly bounded above by the maximum envelope $M(z)$. For any $z \in \mathbb{R}$:
\begin{equation}
    E_n^\alpha(z; \pi) = \int_\Lambda E_n^\alpha(z; \lambda) \pi(\lambda) d\lambda \le \int_\Lambda M(z) \pi(\lambda) d\lambda = M(z).
    \label{eq:mixture_le_envelope}
\end{equation}
Evaluating \eqref{eq:mixture_le_envelope} at the specific threshold $z = Z_\pi(\delta)$ and chaining it with \eqref{eq:mixture_ge_1} yields $M(Z_\pi(\delta)) \ge 1$.

By the duality identity \eqref{eq:duality_identity}, we know that the oracle threshold for $\delta = 1$ perfectly satisfies $M(Z_{opt}(1)) = 1$. Substituting this into our inequality gives:
\begin{equation*}
    M(Z_\pi(\delta)) \ge M(Z_{opt}(1)).
\end{equation*}
Because the envelope $M(z)$ is a strictly increasing function on $z \geq \lambda_{min}$, and $Z_\pi(\delta) \geq \lambda_{min}$ (otherwise $E_n^\alpha(Z_\pi(\delta); \pi) = 0$), applying its inverse strictly preserves the direction of the inequality. We define $z_0 := Z_{opt}(1)$, establishing the formal lower bound for our threshold:
\begin{equation*}
    Z_\pi(\delta) \ge z_0.
\end{equation*}

\textbf{Details of step 2.} To bound the cost of mixing, we analyze the ratio of the mixture e-value to the oracle envelope:
\begin{equation*}
    H(z) = \frac{E_n^\alpha(z; \pi)}{M(z)}.
\end{equation*}
Because $M(z_0) = 1$ and $M$ is strictly increasing, the envelope is strictly positive for all $z \ge z_0$. Furthermore, for any fixed $z \ge z_0$, the normalized integrand $E_n^\alpha(z; \lambda)/M(z)$ is a continuous function of $\lambda$ that achieves a maximum of exactly $1$. Because $\pi(\lambda) \ge \pi_{\min} > 0$, the integral $H(z)$ is strictly positive and continuous for all $z \ge z_0$.

Next, we examine the asymptotic behavior of $H(z)$ as $z \to \infty$. For $\alpha > 0$, factoring out $z^\alpha$, we can write the e-value as $E_n^\alpha(z;\lambda) = z^\alpha(1-\lambda/z)_+^\alpha / I_\alpha(\lambda)$. As $z \to \infty$, the term $(1-\lambda/z)_+^\alpha$ converges to $1$ uniformly for all $\lambda \in \Lambda$. For $\alpha = 0$, we simply have $E_n^0(z;\lambda) = 1/G(\lambda)$ for all $z \ge \lambda_{max}$. Therefore, for all $\alpha \ge 0$, we can permute the limit and minimum to find the normalized integrand converges pointwise as
\[
\lim_{z\rightarrow\infty}\frac{E_n^\alpha(z;\lambda)}{M(z)} = \min_{s\in\Lambda}\lim_{z\rightarrow\infty}\frac{E_n^\alpha(z;\lambda)}{E_n^\alpha(z;s)} = \min_{s\in\Lambda}\frac{1/I_\alpha(\lambda)}{1/I_\alpha(s)} = \frac{I_\alpha(\lambda_{max})}{I_\alpha(\lambda)}.
\]
The minimum is achieved at $\lambda_{max}$ because $I_\alpha(\lambda)$ is a strictly decreasing function. By the dominated convergence theorem, the limit of the integral is
\begin{equation*}
    \lim_{z \to \infty} H(z) = \int_\Lambda \frac{I_\alpha(\lambda_{\max})}{I_\alpha(\lambda)} \pi(\lambda) d\lambda =: \kappa_\infty > 0.
\end{equation*}
Because $H(z)$ is continuous, strictly positive on the closed interval $[z_0, \infty)$, and converges to a strictly positive limit $\kappa_\infty$ as $z \to \infty$, it must achieve a strictly positive global minimum on this domain. Let $\kappa = \min_{z \ge z_0} H(z) > 0$. 

By definition, for all $z \ge z_0$, we have $E_n^\alpha(z; \pi) = M(z)H(z) \ge \kappa M(z)$. Substituting $z = Z_\pi(\delta)$ (which satisfies $z \ge z_0$ by Step 1) and noting that $E_n^\alpha(Z_\pi(\delta); \pi) = 1/\delta$, we obtain
\begin{equation*}
    M(Z_\pi(\delta)) \le \frac{1}{\kappa \delta}.
\end{equation*}
Applying the duality identity \eqref{eq:duality_identity} at the level $\kappa \delta$, we know $M(Z_{opt}(\kappa \delta)) = 1/(\kappa \delta)$. Substituting this yields $M(Z_\pi(\delta)) \le M(Z_{opt}(\kappa \delta))$. Because $M(z)$ is strictly increasing, applying its inverse preserves the inequality, establishing $Z_\pi(\delta) \le Z_{opt}(\kappa \delta)$.

\textbf{Details of step 3.} From Theorem 4.3, we have $G^{-1}(x) \le Z_{opt}(x) \le G^{-1}(x/c_\alpha)$. The threshold regret $\mathcal{R}(\delta)$ is upper bounded as
\begin{equation}
    \mathcal{R}(\delta) = Z_\pi(\delta) - Z_{opt}(\delta) \le Z_{opt}(\kappa \delta) - Z_{opt}(\delta) \le G^{-1}(\kappa \delta / c_\alpha) - G^{-1}(\delta).
    \label{eq:regret_bound_initial}
\end{equation}
Let $x = \delta$ and $y = \kappa \delta / c_\alpha$. By the Fundamental Theorem of Calculus, noting that the derivative of $G^{-1}(u)$ is $-1/\phi(G^{-1}(u))$ where $\phi$ is the standard normal density, we have:
\begin{equation*}
    G^{-1}(y) - G^{-1}(x) = \int_y^x \frac{1}{\phi(G^{-1}(u))} du.
\end{equation*}
The standard Gaussian Mills ratio bound guarantees $\phi(t) > t G(t)$ for all $t > 0$. Substituting $t = G^{-1}(u)$ yields $\phi(G^{-1}(u)) > u G^{-1}(u)$. Therefore,
\begin{equation*}
    \int_y^x \frac{1}{\phi(G^{-1}(u))} du < \int_y^x \frac{1}{u G^{-1}(u)} du.
\end{equation*}
Because the inverse survival function is strictly decreasing, $G^{-1}(u) \ge G^{-1}(x)$ for all $u \in [y, x]$. Factoring this minimum out of the integral gives a strict upper bound:
\begin{equation*}
    \int_y^x \frac{1}{u G^{-1}(u)} du \le \frac{1}{G^{-1}(x)} \int_y^x \frac{1}{u} du = \frac{\ln(x/y)}{G^{-1}(x)}.
\end{equation*}
Substituting $x$ and $y$ back into the bound yields:
\begin{equation*}
    G^{-1}\left(\frac{\kappa \delta}{c_\alpha}\right) - G^{-1}(\delta) < \frac{\ln(c_\alpha / \kappa)}{G^{-1}(\delta)}.
\end{equation*}
Applying this to \eqref{eq:regret_bound_initial} establishes the final regret bound.

\section{Auxiliary results}

\begin{lemma}
\label{lem:c_alpha_increasing}
The function $c(\alpha)$ defined as $c(\alpha) = e^\alpha \alpha^{-\alpha} \Gamma(\alpha+1)$ for $\alpha > 0$, with $c(0) = 1$, is strictly increasing for all real $\alpha \ge 0$.
\end{lemma}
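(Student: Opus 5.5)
The plan is to work with $\log c(\alpha)=\alpha-\alpha\log\alpha+\log\Gamma(\alpha+1)$ on $(0,\infty)$ and show its derivative is strictly positive. We will then handle the endpoint $\alpha=0$ by continuity. Differentiating gives
\begin{align*}
\frac{d}{d\alpha}\log c(\alpha) = 1-\log\alpha-1+\psi(\alpha+1) = \psi(\alpha+1)-\log\alpha,
\end{align*}
where $\psi=\Gamma'/\Gamma$ is the digamma function. So strict monotonicity on $(0,\infty)$ reduces to the inequality $\psi(\alpha+1)>\log\alpha$ for all $\alpha>0$.

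To prove this inequality, use $\psi(\alpha+1)=\psi(\alpha)+1/\alpha$ together with the classical bound $\psi(\alpha)>\log\alpha-1/\alpha$ for $\alpha>0$. Together these give $\psi(\alpha+1)>\log\alpha$ directly. If we prefer a self-contained argument rather than citing this bound, we can use the integral representation
\begin{align*}
\psi(\alpha+1)-\log\alpha=\int_0^\infty\Big(\frac{1}{t}-\frac{1}{e^t-1}\Big)e^{-\alpha t}\,dt+\frac{1}{\alpha}-\int_0^\infty \frac{e^{-\alpha t}}{1}\,dt\cdot 0,
\end{align*}
or more cleanly
\begin{align*}
\psi(\alpha+1)-\log\alpha=\int_0^\infty\Big(\frac{1}{t}-\frac{e^{-t}}{1-e^{-t}}\Big)e^{-\alpha t}\,dt .
\end{align*}
This follows from Gauss's formula for $\psi$ combined with Frullani's integral $\log\alpha=\int_0^\infty (e^{-t}-e^{-\alpha t})t^{-1}\,dt$. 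The integrand is positive because $e^t-1>t$, i.e.\ $1/t>1/(e^t-1)$. Getting the constant terms in this representation exactly right is the only fiddly step. As an alternative route, strict concavity of $\log$ with Jensen gives $\psi(\alpha+1)=\E\log Y$ for a suitable $Y$; but the cleanest alternative is the mean-value argument $\log\Gamma(\alpha+1)-\log\Gamma(\alpha)=\log\alpha$. Since $\log\Gamma$ is strictly convex, its derivative $\psi$ is strictly increasing, so $\psi(\alpha+1)>\log\Gamma(\alpha+1)-\log\Gamma(\alpha)=\log\alpha$, because the slope at the right endpoint of an interval exceeds the secant slope. I will use this last argument as the main proof since it needs only strict log-convexity of $\Gamma$ (Bohr--Mollerup).

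Finally, at $\alpha=0$ we have $\alpha\log\alpha\to0$ and $\Gamma(1)=1$, so $c(\alpha)\to1=c(0)$. Hence $c$ is continuous on $[0,\infty)$ and strictly increasing on $(0,\infty)$, which gives strict increase on $[0,\infty)$. In particular $c_\alpha>1$ for $\alpha>0$, as used in the proof of Theorem~\ref{proposition:near_optimality}. The main obstacle is the digamma inequality, and the secant-slope argument resolves it.
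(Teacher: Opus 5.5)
Your proof is correct. It shares the paper's reduction: differentiate $\log c$ to get $\psi(\alpha+1)-\log\alpha$, then treat $\alpha=0$ by continuity. Where you differ is in how you prove the key inequality $\psi(\alpha+1)>\log\alpha$.

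The paper goes one derivative further. It shows $\psi_1(\alpha+1)-1/\alpha<0$ by comparing $\sum_{k\ge1}(\alpha+k)^{-2}$ with $\int_0^\infty(\alpha+x)^{-2}\,dx$, so $\psi(\alpha+1)-\log\alpha$ is strictly decreasing. It then uses the expansion $\psi(\alpha+1)=\log\alpha+\frac{1}{2\alpha}+O(\alpha^{-2})$ to see that this quantity tends to $0$, and hence is positive.

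Your secant-slope argument combines the recursion $\log\Gamma(\alpha+1)-\log\Gamma(\alpha)=\log\alpha$ with strict monotonicity of $\psi$. It gives the inequality pointwise, with no second derivative and no asymptotic expansion, so it is shorter and more self-contained. The paper's route also shows that $\log c$ is concave with slope decreasing to $0$, but the lemma does not need this.

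\begin{itemize}
\item Strictness, which you do need, comes not from Bohr--Mollerup (which only characterizes $\Gamma$ among log-convex functions). It comes from $\psi'(x)=\sum_{k\ge0}(x+k)^{-2}>0$, the same trigamma series the paper uses, so cite that instead.
\item Your first displayed integral identity, as written, has an extra $+1/\alpha$. The second one, $\psi(\alpha+1)-\log\alpha=\int_0^\infty\bigl(\frac1t-\frac{1}{e^t-1}\bigr)e^{-\alpha t}\,dt$, is correct. Your main argument does not depend on either.
\end{itemize}
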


\begin{proof}
We first show that $c(\alpha)$ is strictly increasing for $\alpha > 0$ by establishing that its natural logarithm, $f(\alpha) = \ln c(\alpha)$, is strictly increasing. Taking the logarithm yields:
\begin{equation*}
    f(\alpha) = \alpha - \alpha \ln \alpha + \ln \Gamma(\alpha+1).
\end{equation*}

Differentiating with respect to $\alpha$, we obtain the first derivative:
\begin{equation*}
    f'(\alpha) = 1 - (\ln \alpha + 1) + \psi(\alpha+1) = \psi(\alpha+1) - \ln \alpha,
\end{equation*}
where $\psi(x) = \frac{d}{dx} \ln \Gamma(x)$ is the digamma function.

To determine the sign of $f'(\alpha)$, we examine the second derivative:
\begin{equation*}
    f''(\alpha) = \psi_1(\alpha+1) - \frac{1}{\alpha},
\end{equation*}
where $\psi_1(x) = \frac{d}{dx}\psi(x)$ is the trigamma function. The trigamma function admits the series representation $\psi_1(x) = \sum_{k=0}^\infty \frac{1}{(x+k)^2}$. Shifting the index by 1, we can write:
\begin{equation*}
    \psi_1(\alpha+1) = \sum_{k=1}^\infty \frac{1}{(\alpha+k)^2}.
\end{equation*}

Because the function $g(x) = \frac{1}{(\alpha+x)^2}$ is strictly decreasing for $x > 0$, we can strictly upper bound the infinite sum by the corresponding integral:
\begin{equation*}
    \sum_{k=1}^\infty \frac{1}{(\alpha+k)^2} < \int_0^\infty \frac{1}{(\alpha+x)^2} dx = \left[ -\frac{1}{\alpha+x} \right]_0^\infty = \frac{1}{\alpha}.
\end{equation*}

Substituting this bound back into the second derivative yields:
\begin{equation*}
    f''(\alpha) < \frac{1}{\alpha} - \frac{1}{\alpha} = 0.
\end{equation*}
Because $f''(\alpha) < 0$ for all $\alpha > 0$, the first derivative $f'(\alpha)$ is a strictly decreasing function. 

Next, we evaluate the asymptotic limit of $f'(\alpha)$ as $\alpha \to \infty$. Using the well-known asymptotic expansion of the digamma function, $\psi(\alpha+1) \sim \ln \alpha + \frac{1}{2\alpha} - \mathcal{O}\left(\frac{1}{\alpha^2}\right)$, we find:
\begin{equation*}
    \lim_{\alpha \to \infty} f'(\alpha) = \lim_{\alpha \to \infty} \left( \ln \alpha + \frac{1}{2\alpha} - \ln \alpha \right) = 0^+.
\end{equation*}

Since $f'(\alpha)$ is strictly decreasing and approaches $0$ from above as $\alpha \to \infty$, it must be that $f'(\alpha) > 0$ for all $\alpha > 0$. Because its derivative is strictly positive, $f(\alpha) = \ln c(\alpha)$ is strictly increasing, which implies that $c(\alpha)$ is strictly increasing for all real $\alpha > 0$.

Finally, we establish continuity at the boundary $\alpha = 0$ by evaluating the limit of $c(\alpha)$ as $\alpha \to 0^+$. We rewrite $c(\alpha)$ using the exponential function:
\begin{equation*}
    \lim_{\alpha \to 0^+} c(\alpha) = \lim_{\alpha \to 0^+} \Gamma(\alpha+1) \exp(\alpha - \alpha \ln \alpha).
\end{equation*}
Since $\Gamma(1) = 1$ and $\lim_{\alpha \to 0^+} (\alpha - \alpha \ln \alpha) = 0$, we obtain:
\begin{equation*}
    \lim_{\alpha \to 0^+} c(\alpha) = 1 \cdot e^0 = 1.
\end{equation*}
Because $\lim_{\alpha \to 0^+} c(\alpha) = 1 = c(0)$, the function is continuous at $\alpha = 0$. Since $c(\alpha)$ is continuous at $0$ and strictly increasing on $(0, \infty)$, we conclude that $c(\alpha)$ is strictly increasing for all real $\alpha \ge 0$.
\end{proof}

\begin{lemma} \label{lemma:fracional_moments}
    For any real $\alpha > 0$, it holds that
\begin{align} \label{eq:i_alpha_parabolic}
    I_\alpha(\lambda) = \frac{\Gamma(\alpha+1)}{\sqrt{2\pi}} e^{-\lambda^2/4} D_{-(\alpha+1)}(\lambda).
\end{align}
where 
\begin{align*}
    D_{-\nu}(z) = \frac{e^{-z^2/4}}{\Gamma(\nu)} \int_0^\infty x^{\nu-1} e^{-zx - x^2/2} dx.
\end{align*}
is the parabolic cylinder function for $\text{Re}(\nu) > 0$.
\end{lemma}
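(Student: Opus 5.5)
Starting from the definition $I_\alpha(\lambda) = \E(Z-\lambda)_+^\alpha = \frac{1}{\sqrt{2\pi}}\int_\lambda^\infty (z-\lambda)^\alpha e^{-z^2/2}\,dz$, I will shift the variable to $x = z - \lambda$, so the range of integration becomes $(0,\infty)$ and the integrand becomes $x^\alpha e^{-(x+\lambda)^2/2}$. Expanding the square gives
\begin{align*}
I_\alpha(\lambda) = \frac{e^{-\lambda^2/2}}{\sqrt{2\pi}} \int_0^\infty x^{\alpha} e^{-\lambda x - x^2/2}\,dx .
\end{align*}
The point of this step is that the remaining integral already has the shape of the integral representation of $D_{-\nu}$. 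The argument is the variable $z=\lambda$, and the index is fixed by matching $x^{\nu-1}$ with $x^\alpha$, which gives $\nu = \alpha+1$. Since $\alpha>0$, we have $\mathrm{Re}(\nu) > 0$, so the representation applies. Convergence of the integral is also immediate for every real $\lambda$, because the factor $e^{-x^2/2}$ dominates.

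Next I will read off the representation with $\nu=\alpha+1$ and $z=\lambda$:
\begin{align*}
\int_0^\infty x^{\alpha} e^{-\lambda x - x^2/2}\,dx = \Gamma(\alpha+1)\, e^{\lambda^2/4} D_{-(\alpha+1)}(\lambda).
\end{align*}
Substituting this into the display above, the exponential prefactors combine as $e^{-\lambda^2/2}\cdot e^{\lambda^2/4} = e^{-\lambda^2/4}$. This yields exactly \eqref{eq:i_alpha_parabolic}.

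The only delicate point is bookkeeping of the Gaussian prefactors, which I will check carefully: the $e^{-\lambda^2/2}$ comes from completing the square, and the $e^{\lambda^2/4}$ comes from inverting the prefactor in the definition of $D$. As a sanity check on the convention, I will take $\alpha=1$ and compare with Lemma~\ref{lemma:i_recursion}, using the known closed form $D_{-2}(\lambda) = e^{\lambda^2/4}\sqrt{2\pi}\,(\phi(\lambda) - \lambda G(\lambda))$. This should reproduce $I_1(\lambda) = \phi(\lambda) - \lambda G(\lambda)$. I will also let $\alpha\to 0$ and check that the formula reduces to $G(\lambda)$.
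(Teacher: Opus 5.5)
Your proposal is correct and follows the paper's proof essentially line for line: the shift $x = z-\lambda$, expanding the square to extract $e^{-\lambda^2/2}$, matching the remaining integral to the stated representation of $D_{-\nu}$ with $\nu=\alpha+1$ and $z=\lambda$, and combining $e^{-\lambda^2/2}e^{\lambda^2/4}=e^{-\lambda^2/4}$. Your sanity checks are also correct and consistent with Lemma~\ref{lemma:i_recursion}, though not needed for the argument: at $\alpha=1$ via $D_{-2}(\lambda)=e^{\lambda^2/4}\sqrt{2\pi}\,(\phi(\lambda)-\lambda G(\lambda))$, and as $\alpha\to 0$ via $D_{-1}(\lambda)=e^{\lambda^2/4}\sqrt{2\pi}\,G(\lambda)$.
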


\begin{proof}


Recall the integral definition of the truncated and uncentered $\alpha$-moment of a standard Gaussian distribution
\begin{align*}
    I_\alpha(\lambda) &:= \mathbb{E}[(Z-\lambda)_+^\alpha] = \frac{1}{\sqrt{2\pi}} \int_\lambda^\infty (z-\lambda)^\alpha e^{-z^2/2} dz.
\end{align*}

Applying the change of variables $x = z - \lambda$, we obtain
\begin{align*}
    I_\alpha(\lambda) &= \frac{1}{\sqrt{2\pi}} \int_0^\infty x^\alpha e^{-(x+\lambda)^2/2} dx = \frac{e^{-\lambda^2/2}}{\sqrt{2\pi}} \int_0^\infty x^\alpha e^{-x^2/2 - \lambda x} dx.
\end{align*}

By setting $\nu = \alpha + 1$ and $z = \lambda$, we can perfectly match our integral to the representation
\begin{align*}
    \int_0^\infty x^\alpha e^{-\lambda x - x^2/2} dx = \Gamma(\alpha+1) e^{\lambda^2/4} D_{-(\alpha+1)}(\lambda).
\end{align*}

Substituting this identity back into our expression for $I_\alpha(\lambda)$ yields the closed-form analytical expression~\eqref{eq:i_alpha_parabolic}.



\end{proof}

\section{Evidence versus testing} \label{section:evidence_vs_testing}

Stepping back from the technicalities for a moment, one perspective to bring to bear on our results here is the age-old tension between evidence versus testing in statistics. The story is as old as modern frequentism itself: Fisher introduced the p-value as a notion of evidence against the null hypothesis, while Neyman and Pearson wanted to use the p-value for decision-making and hypothesis testing. The differences between the Fisherian viewpoint and that advocated by Neyman and Pearson manifests in what one chooses to do with the p-value resulting from a study, not in how one designs the p-value in the first place. Intriguingly, the same cannot be said in the study of e-values, where a focus on testing versus evidence leads to different design criteria. 

If one is focused solely on testing and wants to maximize a traditional notion of power, then the optimal e-values are ``all-or-nothing:'' they have the form $E = \mathbf{1}(A)/\delta$ for some event $A$. Intuitively, we don't care about maximizing the size of $E$, we just want to reject as soon as possible. We thus spread $E$'s mass around as much as possible while ensuring that $\E_P[E]\leq 1$. 
If one is concerned about evidence, on the other hand, then one cares about the growth of the e-value under the alternative hypothesis. This leads to the notion of \emph{e-power}, which is to maximize $\E[\log(E)]$, where the expectation is taken over the alternative.E-values that maximize this criterion usually take the form of exponential functions. We refer to \citet[Section 1]{ramdas2025hypothesis} for much discussion on different perspectives one can have on e-values.


For our purposes, neither a full evidentiary view nor a full testing view fit are entirely appropriate. Both post-hoc inference and multiple testing imply non-fixed level tests (as we elucidate in Section~\ref{section:applications}), so all-or-nothing e-values may not be optimal: if the eventual level $\delta'$ is $\delta - \epsilon$ for any arbitrarily small $\epsilon > 0$, the test will never reject. Conversely, we should not optimize for an abstract concept of evidence; instead, we should take into account that the e-values will be thresholded at levels $1/\delta'$, with $\delta'$ taking values in a sensible range.  For this reason, we build e-values using $\alpha$-powered positive functions that lie \textit{in between} the indicator and exponential functions, leading to more powerful tests in many applications of interest.

The e-power criterion is also motivated by products of e-values. If an e-value $E$ is a product of e-values $E_i$, i.e. $E = \prod_{i \leq n} E_i$, it converges to infinity exponentially fast under the alternative, and choosing the e-values that maximize $\E[\log E]$ also maximizes the rate at which the product approaches infinity. Hence, the logarithm arises naturally in reasoning about a sequence of
products. Interestingly, in the context of concentration inequalities, it is well known that  Cramér-Chernoff
bounds (that is, applying Markov’s inequality after taking the exp function of the sum of random variables) are
optimal among all tail bounds that exploit only the product structure of i.i.d. observations. Thus, exponential functions naturally manifest in both the concentration inequalities and e-value literature when manipulating products of random variables. However, near-optimal concentration inequalities avoid product structures by leveraging $\alpha$-powered positive functions. While product structures are inherent to exact e-values, asymptotic e-values rely on central limit theorems and uniform integrability, and hence are not reliant in product structures. This further motivates the study of $\alpha$-powered functions for asymptotic e-values. 

Finally, let us note that $\alpha$-powered positive asymptotic e-values lead to (at least theoretically) a strictly more general approach than exponential-based asymptotic e-values. To see this, for $\alpha > 0$, define the class of functions
\begin{align*}
    \F_+^{(\alpha)} := \lc f:f(u) = \int_{-\infty}^{\infty} (u - r)_+^\alpha \nu(dr) \text{ for some Borel measure } \nu \geq 0 \text{ on } \R \text{ and all } u \in \R\rc.
\end{align*}
The following fact from \citet{pinelis2006binomial} characterizes the class of such functions. 

\begin{fact} [Proposition 1.1 in \citet{pinelis2006binomial}]  \label{fact:characterization_alpha_functions}

For every natural $\alpha$, one has $f \in \F_+^{(\alpha)}$ iff $f$ has finite derivatives $f^{(0)}:= f, f^{(1)}:= f', \ldots, f^{(\alpha - 1)}$ on $\R$ such that $f^{(\alpha - 1)}$ is convex on $\R$ and $\lim_{u \to -\infty}f^{(j)}(u) = 0$ for $j = 0, 1, \ldots, \alpha - 1$.
\end{fact}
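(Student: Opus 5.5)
The statement to prove is the last item before the cut-off, Fact~\ref{fact:characterization_alpha_functions}: for natural $\alpha$, $f\in\F_+^{(\alpha)}$ if and only if $f,f',\dots,f^{(\alpha-1)}$ are finite on $\R$, $f^{(\alpha-1)}$ is convex, and $f^{(j)}(u)\to 0$ as $u\to-\infty$ for $j=0,\dots,\alpha-1$. I will prove the two directions separately, using the identity $\frac{d}{du}(u-r)_+^k = k(u-r)_+^{k-1}$ and a Taylor formula with integral remainder.

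\emph{Necessity.} Let $f(u)=\int (u-r)_+^\alpha\,\nu(dr)$ with $f$ finite everywhere. Finiteness at every $u$ gives $\int (u-r)_+^{k}\nu(dr)<\infty$ for all $k\le\alpha$ and all $u$. This holds because $(u-r)_+^k \le 1+(u+1-r)_+^\alpha$ on the relevant range, and the mass of $\nu$ on $(-\infty,u]$ is finite since $(u+1-r)_+^\alpha\ge 1$ there. Differentiating under the integral is then justified by dominated convergence, since difference quotients of $(u-r)_+^k$ are bounded by $k(u+1-r)_+^{k-1}$ locally in $u$. This yields
\[
f^{(j)}(u)=\frac{\alpha!}{(\alpha-j)!}\int (u-r)_+^{\alpha-j}\,\nu(dr),\qquad j\le \alpha-1.
\]
In particular $f^{(\alpha-1)}(u)=\alpha!\int (u-r)_+\,\nu(dr)$, which is a mixture of convex functions and hence convex. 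Each integrand decreases to $0$ as $u\to-\infty$, so monotone convergence gives $f^{(j)}(u)\to 0$.

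\emph{Sufficiency.} Set $g=f^{(\alpha-1)}$. It is convex, finite, and tends to $0$ at $-\infty$, so $g\ge 0$ and $g$ is nondecreasing. A convex function whose right derivative is strictly positive somewhere would grow linearly in one direction, and since $g\to0$ at $-\infty$ this forces $g'_+\ge 0$ with $g'_+(u)\to 0$ as $u\to-\infty$. Let $\mu$ be the Stieltjes measure of the right derivative, so $g''=\mu$ in the distributional sense and $g'_+(u)=\mu((-\infty,u])$. Integrating once, using $g(-\infty)=0$, gives $g(u)=\int (u-r)_+\,\mu(dr)$ by Fubini. Define $\nu=\mu/\alpha!$.

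Now integrate upward $\alpha-1$ times. If $f^{(j)}(u)=\int_{-\infty}^u f^{(j+1)}(s)\,ds$, which holds because $f^{(j)}\to0$ at $-\infty$ and $f^{(j)}$ is absolutely continuous, then Fubini together with $\int_{-\infty}^u (s-r)_+^k\,ds=(u-r)_+^{k+1}/(k+1)$ gives $f(u)=\frac{1}{\alpha!}\int (u-r)_+^\alpha\,\mu(dr)$. All integrands are nonnegative, so Tonelli applies with no integrability hypothesis. The case $\alpha=1$ is the same argument with no iteration.

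The main obstacle I expect is the careful handling at $-\infty$. One must check that $g'_+(-\infty)=0$ rather than merely some limit $\ge 0$; this follows because a positive limit would make $g\to-\infty$, contradicting $g\ge 0$. One must also check that the boundary constants vanish in each integration step, which is exactly where the hypotheses $\lim f^{(j)}=0$ enter. The rest is Fubini and Tonelli bookkeeping.
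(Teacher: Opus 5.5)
Your proposal is correct and follows essentially the same route as the paper: necessity by differentiating under the integral sign, and sufficiency by repeated Fubini/Tonelli integration from $-\infty$, with $\nu$ equal to the Stieltjes measure of the right derivative of the convex function $f^{(\alpha-1)}$, divided by $\alpha!$. Your version is more careful than the paper's sketch. You justify the differentiation under the integral, and you check that the right derivative of $f^{(\alpha-1)}$ tends to $0$ at $-\infty$. You also build the representation upward from $f^{(\alpha-1)}$ using only nonnegative integrands, which avoids the boundary term the paper leaves implicit in its final Stieltjes integration by parts.
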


\begin{proof}
If $f \in \F^{(\alpha)}$ for $\alpha \in \Nb$, then $f'(u) =\alpha \int_{-\infty}^\infty (u - r)_+^{\alpha - 1} \nu (dr)$. Vice versa, if $f$ satisfies the conditions listed after ``iff'', then one can use the Fubini theorem repeatedly to see that for all real $u$
\begin{align*}
    f(u) &= \int_{-\infty}^u f'(r) dr = \int_{-\infty}^u dr \int_{-\infty}^r f''(s) ds = \int_{-\infty}^u (u - s) f''(s) ds = \ldots
    \\& = \int_{-\infty}^{u} \frac{(u-s)^{\alpha - 1}}{(\alpha  - 1)!}f^{(\alpha)}(s) ds = \int_{-\infty}^{u} \frac{(u-s)^\alpha}{\alpha!}df^{(\alpha)}(s) = \int_{-\infty}^{\infty} (u - s)_+^\alpha \nu(ds),
\end{align*}
where $f^{(\alpha)}$ is the (nondecreasing) right derivative of the convex function $f^{(\alpha - 1)}$ and $\nu(ds) := df^{(\alpha)}(s) / \alpha!$.
\end{proof}

In particular, Fact~\ref{fact:characterization_alpha_functions} establishes that the functions $u \mapsto \exp(\lambda(u - r))$ belong to $\F_+^{(\alpha)}$. Note that the e-values presented in~\eqref{eq:alpha_powered_e_value} are rescaled versions of $\alpha$-powered positive functions, so the exponential e-value $E_{\exp}(\lambda)$ can be recovered as a mixture of such e-values; furthermore, the proof of Fact~\ref{fact:characterization_alpha_functions} precisely provides this mixture: $\nu(dr) = \exp(r)/\alpha!$. Thus, $\alpha$-powered positive e-values are somehow a refinement of the exponential e-value, as a mixture of the former allows for recovering the latter.

\section{Experimental Details} \label{section:experimental_details}

In this section, we provide precise descriptions of the experiments conducted to evaluate the performance of the method of mixtures for both post-hoc inference and multiple testing. All experiments are implemented in Python, utilizing numerical optimization for hyperparameter selection and standard scientific computing libraries for simulations. The code can be found at \href{https://github.com/DMartinezT/bentkus_evalues}{https://github.com/DMartinezT/bentkus\_evalues}.  

\subsection{Post-hoc Inference Experiment} \label{section:posthoc_experiment}

The post-hoc inference experiment, summarized in Table~
\ref{table:posthoc}, compares the sharpness of inference provided by mixtures of Bentkus-type e-values $E_n^\alpha(\theta; \pi_\alpha)$, with $\alpha \in \{0, 1, 2\}$, and exponential-based e-values $E_n^\infty(\theta; \pi_\alpha)$. 


\paragraph{Hyperparameter Selection.} For each e-value type, we consider a mixture $\pi_\alpha$ over three hyperparameters $\lambda_1, \lambda_2, \lambda_3$ that are optimal for target levels $\delta \in \{0.1, 0.05, 0.01\}$. Specifically:
\begin{itemize}
    \item For the Bentkus mixture, we use $\lambda_i = \arg\min_{\lambda} U_{\delta_i, \alpha}(\lambda)$.
    \item For the exponential mixture, we use $\lambda_i = \sqrt{2\log(1/\delta_i)}$.
\end{itemize}
The mixture e-values are defined as $E(Z; \pi) = \frac{1}{3} \sum_{i=1}^3 E(Z; \lambda_i)$.

\paragraph{Procedure.} We evaluate the mixture e-values for observed statistics $Z \in \{2.1, 2.4, 2.7, 3.0\}$. For each $Z$ and each mixture, we calculate the post-hoc level $\delta = 1/E(Z; \pi)$, which represents the tightest significance level at which the null hypothesis can be rejected given the data.

\subsection{Multiple Testing Experiment} \label{section:multiple_testing_experiment}

The multiple testing experiment, presented in Table~
\ref{table:multiple_testing}, assesses the power of the e-BH procedure when using mixture e-values.

\paragraph{Experimental Setting.} We simulate a multiple testing problem with $K=100$ hypotheses and a global FDR target $\delta^* = 0.1$. The number of non-null hypotheses is determined by the proportion $p \in \{0.01, 0.025, 0.05, 0.075, 0.1\}$. For each simulation,
\begin{itemize}
    \item the null test statistics are sampled from $N(0, 1)$,
    \item the non-null test statistics are sampled from $N(\mu, 1)$ with signal strength $\mu = 3.5$.
\end{itemize}

\paragraph{Mixture Configuration.} We use mixtures over a grid of 10 hyperparameters. The grid is constructed by taking $\lambda_{min}$ and $\lambda_{max}$ to be the optimal hyperparameters for the levels $\delta_{max} = \delta^*\rho = 0.02$ and $\delta_{min} = \delta^*/K = 0.001$, respectively. This range is chosen based on the observation that in most practical e-BH applications, the data-dependent thresholds lie within $[1/\delta^*, K/\delta^*]$. The grid consists of 10 values linearly spaced between the corresponding $\lambda_{min}$ and $\lambda_{max}$.

\paragraph{Evaluation.} We run 1000 independent simulations for each proportion $p$. In each simulation, the e-BH procedure is applied to the mixture e-values, and the number of rejections is recorded. We report the average number of rejections across all simulations, which serves as a measure of the power of the procedure.

\end{document}